\documentclass[preprint,12pt]{elsarticle}




\usepackage{amssymb}
\usepackage{amsmath}
\usepackage{amsthm}


\usepackage{amsfonts, amsmath}
\usepackage{mathtools, float}
\usepackage{tikz}
\usepackage{enumitem}
\usepackage{hyperref}
\usepackage[normalem]{ulem}
\usepackage{subcaption} 
\usetikzlibrary{graphs,graphs.standard,quotes,shapes.geometric, backgrounds}
\usepackage{xcolor}

\tikzset{
    tdvert/.style = {circle, draw = black, fill = TDVertColor},
    vert/.style = {circle, draw = black, fill = black}
}
\definecolor{TDVertColor}{RGB}{0, 120, 220}


\newcommand{\tdnum}[1]{\gamma_{t}(#1)}
\newcommand{\tdset}{\gamma_t \text{-set}}
\newcommand{\bondnum}[2]{b_t^{#2}(#1)}
\newcommand{\bond}[1]{b_{t}(#1)}
\newcommand{\set}[1]{\{#1\}}

\usetikzlibrary{graphs,graphs.standard,shapes.geometric}
\tikzset{lvertex/.style={circle,draw=black}}
\tikzset{vertex/.style={circle,draw=black,fill=black}}

\newtheorem{defn}{Definition}
\newtheorem{thm}{Theorem}
\newtheorem{prop}{Proposition}
\newtheorem{lem}{Lemma}
\newtheorem{cor}{Corollary}
\newtheorem*{remark}{Remark}

\newtheoremstyle{case}
  {\topsep} 
  {\topsep} 
  {} 
  {} 
  {\itshape} 
  {:} 
  {.5em} 
  {\thmname{#1} \thmnote{#3}} 
\theoremstyle{case}
\newtheorem{case}{Case}[thm]

\journal{Discrete Mathematics}

\begin{document}

\begin{frontmatter}



\title{The $k$-Total Bondage Number of a Graph}


\author[label1]{Jean-Pierre Appel}
\author[label2]{Gabrielle Fischberg}
\author[label3]{Kyle Kelley}
\author[label1]{Nathan Shank}
\author[label5]{Eliel Sosis} 

\affiliation[label1]{organization={Moravian University, Department of Mathematics and Computer Science},
            addressline={1200 Main St}, 
            city={Bethlehem},
            postcode={18018}, 
            state={PA},
            country={USA}}

\affiliation[label2]{organization={Tufts University, Department of Mathematics},
            addressline={177 College Ave}, 
            city={Medford},
            postcode={02155}, 
            state={MA},
            country={USA}}
            
\affiliation[label3]{organization={Kenyon College, Department of Mathematics},
            addressline={103 College Park Dr}, 
            city={Gambier},
            postcode={43022}, 
            state={OH},
            country={USA}}


\affiliation[label5]{organization={University of Michigan, Department of Mathematics},
            addressline={530 Church St}, 
            city={Ann Arbor},
            postcode={48109}, 
            state={MI},
            country={USA}}

\begin{abstract}
Let $G=(V,E)$ be a connected, finite undirected graph. A set $S \subseteq V$ is said to be a total dominating set of $G$ if every vertex in $V$ is adjacent to some vertex in $S$. The total domination number, $\gamma_{t}(G)$, is the minimum cardinality of a total dominating set in $G$. We define the $k$-total bondage of $G$ to be the minimum number of edges to remove from $G$ so that the resulting graph has a total domination number at least $k$ more than $\gamma_{t}(G)$. We establish general properties of $k$-total bondage and find exact values for certain graph classes including paths, cycles, wheels, complete and complete bipartite graphs. 
\end{abstract}



\begin{keyword}\
Total Domination \sep Domination \sep Total Bondage \sep Bondage


\MSC[2020] 05C69  
\end{keyword}

\end{frontmatter}


\section{Introduction}\label{intro}
The study of domination in graphs began with Berge \cite{berge1958} and Ore \cite{ore1962} and gained momentum through the work of Cockayne and other (\cite{Cockayne1975}, \cite{Cockayne1976}, \cite{Cockayne1977}, and \cite{Cockayne1978}). Domination theory has many applications, including the design and analysis of communication networks (\cite{Du2013}, \cite{haynes1998}, and \cite{Kelleher1985}). For example, the concept of total domination corresponds to placing the smallest number of transmitters in a network so that every network node is adjacent to a transmitter. In addition to coverage it is natural to ask how tolerant such a system is to connection failures, which leads to bondage theory. We can measure fault tolerance in this system by how many failed network connections, or broken edges, the system can have until we need to add additional transmitters. In this paper, we generalize previous concepts in bondage theory and apply them to some common graph classes. 

All graphs that we consider are finite, simple, and undirected. We start with the following definitions on total domination and total bondage in graphs. Total domination was first introduced in \cite{Totaldom1980},  and \cite{TotalDomTextbook} explores in depth many of the results in the field.

\begin{defn}
    A \textbf{total dominating set}, notated TD-set, in a graph $G$ without isolated vertices is a subset $S$ of vertices from $G$ such that every vertex in $V(G)$ is adjacent to at least one vertex in $S$.
    \label{def:tdset}
\end{defn}

\begin{defn}
    A \textbf{minimal TD-set} of $G$ is a TD-set $S$ such that no proper subset of $S$ is a TD-set.
    \label{def:minimaltdset}
\end{defn}

\begin{defn}
    A \textbf{minimum TD-set} of $G$ is a minimal TD-set $S$ such that there exists no TD-set $T$ where $|T|<|S|$.
    \label{def:minimumtdset}
\end{defn}
\begin{defn}
    The \textbf{total domination number}, $\tdnum{G}$, is the cardinality of a minimum total dominating set of $G$.  A TD-set of $G$ of cardinality $\tdnum{G}$ is called a $\tdset$ of $G$.
    
    \label{def:tdnum}
\end{defn}
Note that every graph without isolated vertices has a TD-set, since $S=V(G)$ is such a set, however, we are interested in finding the TD-set of minimum cardinality. See Figure \ref{fig:MinTD-sets} for an example of a minimal TD-set that is not a minimum TD-set. Throughout this paper shaded vertices indicate vertices which form a TD-set and we will only consider graphs with no isolated vertices, even after edges have been removed. 

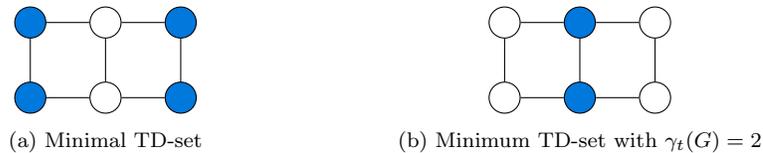
\begin{figure}[h]
    \centering
    \begin{subfigure}{0.45\textwidth}
    \centering
    \begin{tikzpicture}
        \node[tdvert, minimum size = 1em] (a1) at (-4,0) {};
        \node[tdvert, minimum size = 1em] (b1) at (-4,1) {};
        \node[vert, fill=white, minimum size = 1em] (c1) at (-3,0) {};
        \node[vert, fill=white, minimum size = 1em] (d1) at (-3,1) {};
        \node[tdvert, minimum size = 1em] (e1) at (-2,0) {};
        \node[tdvert, minimum size = 1em] (f1) at (-2,1) {};
        \graph[simple] {
            (c1) -- {(a1), (d1), (e1)},
            (d1) -- {(b1), (f1)},
            (a1) -- (b1),
            (e1) -- (f1);          
        };
    \end{tikzpicture}
       \caption{Minimal TD-set}
    \end{subfigure}
    \begin{subfigure}{0.45\textwidth}
    \centering
    \begin{tikzpicture}
        \node[vert, fill=white, minimum size = 1em] (a) at (0,0) {};
        \node[vert, fill=white, minimum size = 1em] (b) at (0,1) {};
        \node[tdvert, minimum size = 1em] (c) at (1,0) {};
        \node[tdvert, minimum size = 1em] (d) at (1,1) {};
        \node[vert, fill = white, minimum size = 1em] (e) at (2,0) {};
        \node[vert, fill=white, minimum size = 1em] (f) at (2,1) {};
        \graph[simple] {
            (c) -- {(a), (d), (e)},
            (d) -- {(b), (f)},
            (a) -- (b),
            (e) -- (f);          
        };
    \end{tikzpicture}
    \caption{Minimum TD-set with $\tdnum{G}=2$}
    \end{subfigure}
    \caption{A minimal TD-set which is not a minimum TD-set.}
    \label{fig:MinTD-sets}
\end{figure}

The minimum number of edge deletions which increases the domination number was first introduced in \cite{bauer1983domination} as \textit{dominating line stability} and was redefined as \textit{bondage number} in \cite{Bondage1990}. A survey of results for bondage number can be found in \cite{bondage_survey}.  Bondage number was recently extended to $k-$synchronous bondage number in \cite{anaya2025}. Similarly we extend the idea of total bondage which was first introduced in \cite{1991Bondage}.
\begin{defn}
    The \textbf{total bondage number}, $\bond{G}$, is the minimum number of edges that must be deleted from $G$ in order to increase the total domination number. 
    \label{def:bondnum}
\end{defn}

\begin{defn}
    The $\mathbf{k}$\textbf{-total bondage number}, $\bondnum{G}{k}$, is the minimum number of edges that must be deleted from $G$ in order to increase the total domination number by at least $k$.
    \label{def:kbondnum}
\end{defn}

\begin{figure}[h]
\captionsetup[subfigure]{margin=0.1\columnwidth}
\begin{center}
    \begin{subfigure}[b]{.3\textwidth}
        \centering
            \begin{tikzpicture}
                \graph[simple, nodes={draw, circle, minimum size =  1em}, empty nodes] 
                {
                    subgraph C_n [name=outer, n=5, clockwise, radius = 0.9cm],
                    outer 1 -- outer 3;
                };
                \node[tdvert, minimum size = 1em] at (outer 1) {};
                \node[tdvert, minimum size = 1em] at (outer 3) {};
            \end{tikzpicture}
        \subcaption{$G$ with $\tdnum{G}=2$\\ \phantom{.}}
        \label{fig:Fig1}
    \end{subfigure}
    \begin{subfigure}[b]{0.3\textwidth}
        \centering
            \begin{tikzpicture}
                \graph[simple, nodes={draw, circle, minimum size =  1em}, empty nodes] 
                {
                    subgraph C_n [name=outer, n=5, clockwise, radius = 0.9cm],
                    outer 5 -- outer 1;
                };
                \node[tdvert, minimum size = 1em] at (outer 1) {};
                \node[tdvert, minimum size = 1em] at (outer 2) {};
                \node[tdvert, minimum size = 1em] at (outer 3) {};
            \end{tikzpicture}
        \subcaption{Subgraph of $G$ with $\bondnum{G}{1} = 1$}   
        \label{fig:Fig2}
    \end{subfigure}
    \begin{subfigure}[b]{0.3\textwidth}
        \centering
            \begin{tikzpicture}
                \graph[simple, nodes={draw, circle, minimum size =  1em}, empty nodes] 
                {
                    subgraph C_n [name=outer, n=5, clockwise, radius = 0.9cm],
                    outer 5 -!- outer 1,
                    outer 1 -- outer 3,
                    outer 3 -!- outer 4,
                };
                \node[tdvert, minimum size = 1em] at (outer 1) {};
                \node[tdvert, minimum size = 1em] at (outer 4) {};
                \node[tdvert, minimum size = 1em] at (outer 3) {};
                \node[tdvert, minimum size = 1em] at (outer 5) {};
            \end{tikzpicture}
        \subcaption{Subgraph of $G$ with $\bondnum{G}{2} = 2$}
        \label{fig:Fig3}
    \end{subfigure}
\caption{Comparison of total bondage numbers, showing deletions need not be successive as $k$ increases.} 
\label{fig:motive_gt(G)}
\end{center}
\end{figure}
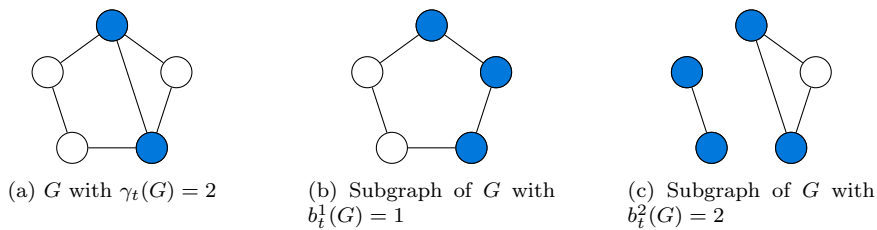

Note that $\bondnum{G}{1} = \bond{G}$. Figure \ref{fig:motive_gt(G)} motivates our definition of $\bondnum{G}{k}$ by showing that $\bondnum{G}{2}$ can involve removing a different set of edges than applying two successive single bondage moves ($\bondnum{G}{1}$). Kulli and Patwari proved results for $\bondnum{G}{1}$ in \cite{1991Bondage} for different types of graph classes. In this paper we extend some of their results to $\bondnum{G}{k}$ for $k>1$.

We first prove some general properties of $\gamma_t(G)$ that help with our investigation of $k$-total bondage. We then consider different graph classes of increasing complexity, and derive results for $\bondnum{G}{k}$ for paths, cycles, wheel graphs, and complete graphs. The main result proves the exact values for $\bondnum{K_n}{k}$ for all possible values of $n$ and $k$. We also provide several bounds for $k$-total bondage for complete bipartite graphs. We finish with some unusual graphs that give general results on $\bondnum{G}{k}$. 

\section{Preliminary Properties}
While in this paper we start with connected graphs, however, graphs often become disconnected when removing edges.
The following obvious lemma relates the total domination number of a graph's components to the entire graph.
\begin{lem}\label{lem:tdnum_components}
    For a disconnected graph $G$ with no isolated vertices and $j$ components, $H_1, H_2, \dots, H_j$, we have 
    \begin{equation*}
        \tdnum{G} = \sum_{i=1}^j \tdnum{H_i}.
    \end{equation*}
\end{lem}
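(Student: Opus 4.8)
The plan is to establish the two inequalities $\tdnum{G} \le \sum_{i=1}^j \tdnum{H_i}$ and $\tdnum{G} \ge \sum_{i=1}^j \tdnum{H_i}$ separately, exploiting the single structural fact that in a disconnected graph every edge lies entirely within one component. Consequently adjacency never crosses a component boundary, which makes total domination a purely local property: a vertex of $H_i$ can only be dominated by another vertex of $H_i$.

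For the upper bound, I would choose a $\tdset$ $S_i$ for each component $H_i$; such a set exists for every $i$ because, by hypothesis, $G$ (and hence each component) has no isolated vertices. Forming $S = \bigcup_{i=1}^j S_i$, every vertex $v \in V(G)$ belongs to exactly one component $H_i$ and is therefore adjacent to some vertex of $S_i \subseteq S$, so $S$ is a TD-set of $G$. Since the vertex sets $V(H_i)$ are pairwise disjoint, this yields $\tdnum{G} \le |S| = \sum_{i=1}^j |S_i| = \sum_{i=1}^j \tdnum{H_i}$.

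For the lower bound, I would start from a $\tdset$ $S$ of $G$ and set $S_i = S \cap V(H_i)$. The key observation is that for any $v \in V(H_i)$, the vertex of $S$ dominating $v$ is a neighbor of $v$ and hence already lies in $V(H_i)$, so it belongs to $S_i$; thus each $S_i$ is itself a TD-set of $H_i$ and $|S_i| \ge \tdnum{H_i}$. Because the sets $V(H_i)$ partition $V(G)$, summing gives $\tdnum{G} = |S| = \sum_{i=1}^j |S_i| \ge \sum_{i=1}^j \tdnum{H_i}$, and combining the two inequalities proves the lemma. I expect no genuine obstacle here; the only point requiring care is verifying that the restriction $S_i$ is a bona fide TD-set of $H_i$ rather than merely a dominating set, which follows at once from the fact that edges do not cross between components.
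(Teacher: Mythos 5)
Your proof is correct, and it is exactly the standard argument the paper has in mind: the paper states this lemma without proof, calling it ``obvious,'' and your two-inequality argument (union of component $\gamma_t$-sets for the upper bound, restriction of a $\gamma_t$-set to each component for the lower bound, both justified by the fact that no edge crosses components) is the canonical way to fill in that omission. Nothing is missing; your care in noting that the restriction is a \emph{total} dominating set of each component, not merely a dominating set, is precisely the one point worth making explicit.
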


The removal of a single edge can only increase the total domination number by at most 2.  This is shown in the following theorem. Note that we never remove pendant edges because this would create an isolated vertex. 

\begin{thm}\label{thm:max_edge_removal_bondage_diff}
Given a graph $G$ with no isolated vertices and any non-pendant edge $e$, 
\begin{equation*}
\gamma_t(G\setminus \{e\}) \leq \gamma_t(G) + 2.    
\end{equation*}
\end{thm}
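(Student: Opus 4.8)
The plan is to start from a $\gamma_t$-set $S$ of $G$ and to exhibit a total dominating set of $G\setminus\{e\}$ whose cardinality exceeds $|S|=\gamma_t(G)$ by at most $2$; since $\gamma_t(G\setminus\{e\})$ is no larger than the size of any TD-set of $G\setminus\{e\}$, this yields the bound. Write $e=uv$. The first observation I would record is that deleting $e$ can only destroy the domination of $u$ or of $v$: for any vertex $x\notin\{u,v\}$ we have $N_{G\setminus\{e\}}(x)\cap S=N_G(x)\cap S$, so $x$ retains a neighbor in $S$. Thus at most two vertices of $G$ fail to be dominated by $S$ after the deletion, and the whole problem reduces to repairing $u$ and $v$.

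Next I would split into cases according to whether $u$ and $v$ lie in $S$, using the non-pendant hypothesis to supply alternative neighbors. If neither $u$ nor $v$ is in $S$, then neither was dominating the other, so $S$ itself remains a TD-set of $G\setminus\{e\}$ and nothing must be added. If exactly one endpoint, say $u$, lies in $S$, then $u$ is still dominated (its $S$-neighbor is not $v$), and only $v$ can be orphaned, namely when $N_G(v)\cap S=\{u\}$; since $e$ is non-pendant, $v$ has a neighbor $v'\neq u$ that survives in $G\setminus\{e\}$, and I would set $S'=S\cup\{v,v'\}$, so that $v$ is dominated by $v'$ and $v'$ by $v$. If both $u$ and $v$ lie in $S$, each can lose domination only when its unique $S$-neighbor was the other endpoint; picking surviving neighbors $a$ of $u$ and $b$ of $v$ (again guaranteed by non-pendancy) and adjoining them gives $S'=S\cup\{a,b\}$, in which $u$ is dominated by $a$, $v$ by $b$, while $a$ and $b$ are dominated by $u,v\in S$ respectively.

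In each case the repaired set $S'$ is a total dominating set of $G\setminus\{e\}$ with $|S'|\le|S|+2$, which establishes the claim. The step I expect to be the main obstacle, and the reason the cruder increment of one does not always suffice, is guaranteeing that the newly inserted vertices are themselves totally dominated, not merely that $u$ and $v$ regain a dominator. This is exactly why, in the single-endpoint case, I adjoin the orphaned vertex together with one of its neighbors rather than a lone vertex: the pair dominates each other, whereas a single added neighbor could be left with no neighbor in $S'$. Verifying that no other vertex is disturbed is immediate, since $S\subseteq S'$ and only the edge $e$ was removed, so domination can only improve elsewhere.
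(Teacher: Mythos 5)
Your proof is correct and follows essentially the same route as the paper's: a case analysis on which endpoints of $e$ lie in a $\gamma_t$-set $S$, using the non-pendant hypothesis to supply replacement neighbors, and augmenting $S$ by at most two vertices (the orphaned endpoint plus a neighbor, or one surviving neighbor of each endpoint). Your write-up is somewhat more explicit than the paper's about why the added vertices are themselves totally dominated and why no other vertex is disturbed, but the underlying argument is the same.
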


\begin{proof}
    Let $G$ be a graph with no isolated vertices and assume there exists a non-pendant edge $ab$.  
    Let $S$ be a $\tdset$ of $G$. Let $c$ be adjacent to $a$ and $d$ be adjacent to $b$, where $c$ and $d$ are not necessarily unique and $c \neq b$ and $d \neq a$. Note $c$ and $d$ will necessarily exist since $ab$ is not a pendant edge. If $a,b \not \in S$, then $S$ is a $\tdset$ of $G \setminus \set{ab}$. If $a \in S$ and $b \not \in S$, then $S \cup \set{b,d}$ is a TD-set of $G \setminus \set{ab}$. If $a,b \in S$, then $S \cup \set{c,d}$ is a TD-set of $G$. \qedhere
\end{proof}
\section{Graph Classes}
Kulli and Patwari (\cite{1991Bondage}) have partial results for the total bondage number for path graphs, cycles, wheel graphs, complete graphs, and complete bipartite graphs. Here we extend these results to $k$-total bondage. 

\subsection{Paths and Cycles}

\indent In \cite{TotalDomTextbook} Henning and Yeo found the total domination number of paths and cycle graphs and in \cite{1991Bondage} Kulli and Patwari found the total bondage number for paths and cycles. Here $n$ denotes the order of the path and cycle. 

\begin{prop}[\cite{TotalDomTextbook}]\label{prop:p01} For any path graph $P_n$ with $n \geq 2$ or cycle graph $C_n$ with $n \geq 3$, 
    \begin{equation*}
        \tdnum{P_n} = \tdnum{C_n} = \begin{cases}
       \frac{n}{2} & \text{ if } n \equiv 0 \pmod{4},\\
        \frac{n+1}{2} & \text{ if } n \equiv 1,3 \pmod{4},\\
        \frac{n}{2} +1 & \text{ if } n \equiv 2 \pmod{4}.\\ 
    \end{cases}
    \end{equation*}
\end{prop}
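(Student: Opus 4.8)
The plan is to prove matching upper and lower bounds on $\tdnum{P_n}$ and $\tdnum{C_n}$, treating the two families together since their total dominating sets have nearly identical local structure. For the upper bound I would exhibit an explicit total dominating set realizing the claimed value. The basic building block is the ``two-on, two-off'' pattern: selecting the vertices in positions congruent to $2$ and $3$ modulo $4$ produces runs of two consecutive chosen vertices separated by gaps of two, and one checks directly that this is a TD-set (each chosen vertex has a chosen neighbor, and each unchosen vertex is adjacent to the chosen vertex bordering its gap). When $4\mid n$ this pattern tiles the whole path or cycle exactly and uses $n/2$ vertices. For the other residues I would keep this pattern on an initial segment of length $4\floor{n/4}$ and patch the remaining $1$, $2$, or $3$ vertices by lengthening the final run or appending one extra pair, adding exactly one vertex beyond $\floor{n/2}$ in each case; a short case check confirms the resulting sizes are $\ceil{n/2}$ when $n$ is odd and $n/2+1$ when $n\equiv 2\pmod 4$.

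The lower bound is where the real work lies, and I would obtain it from the local structure forced by total domination. Let $S$ be any TD-set and view it as a union of maximal runs of consecutive chosen vertices. Since every chosen vertex needs a chosen neighbor, each run has length at least $2$. Since every unchosen vertex needs a chosen neighbor, a gap of unchosen vertices strictly between two runs can have length at most $2$ (a third interior gap vertex would have only unchosen neighbors), and a gap at an end of a path can have length at most $1$. Writing $m$ for the number of runs, summing lengths gives $|S|\ge 2m$, while summing gap lengths gives $n-|S|\le 2m$ for both the path (two end gaps of size $\le 1$ together with $m-1$ internal gaps of size $\le 2$) and the cycle ($m$ internal gaps of size $\le 2$). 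Combining these yields $n\le 2|S|$, hence $|S|\ge \ceil{n/2}$, which already matches the claim for $n$ odd and for $n\equiv 0\pmod 4$.

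The main obstacle is the case $n\equiv 2\pmod 4$, where $\ceil{n/2}=n/2$ but the claimed value is $n/2+1$, so the crude bound must be improved by one. The plan here is a parity argument: if some TD-set had $|S|=n/2$, then both inequalities above must hold with equality, forcing $|S|=2m$ and $n-|S|=2m$, and therefore $m=n/4$. But $n\equiv 2\pmod 4$ makes $n/4$ a non-integer, a contradiction; hence $|S|\ge n/2+1$. Assembling the matching bounds in each residue class completes the proof, and Lemma~\ref{lem:tdnum_components} is not needed here since paths and cycles are connected. The one point demanding care is verifying that the run/gap dichotomy is exhaustive and that the end-gap bound ($\le 1$) for paths versus the internal-gap bound ($\le 2$) for cycles are applied correctly, as this asymmetry is exactly what makes the identical final formula hold for both families.
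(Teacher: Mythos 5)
Your proof is correct. One point of comparison worth making explicit: the paper itself gives no proof of this proposition --- it is imported verbatim from the Henning--Yeo monograph \cite{TotalDomTextbook} as a cited result --- so your argument is a self-contained substitute rather than a rival to anything in the text. Your route (explicit ``two-on, two-off'' construction for the upper bound; decomposition of an arbitrary TD-set into maximal runs of length $\geq 2$ separated by gaps of length $\leq 2$, giving $|S| \geq \lceil n/2 \rceil$; and a parity argument showing equality is impossible when $n \equiv 2 \pmod 4$ because it would force $m = n/4$ runs) is the standard elementary proof of this formula, and all three pieces check out: the run-length and gap-length bounds follow correctly from maximality and the definition of total domination, the end-gap bound of $1$ for paths versus the internal-gap bound of $2$ correctly compensate so that both families satisfy $n - |S| \leq 2m$, and the contradiction $2m = n/2$ with $4 \nmid n$ is sound. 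Two small things you could tighten: for the upper bound on cycles you can avoid re-verifying the patched construction by simply observing that $P_n$ is a spanning subgraph of $C_n$, so any TD-set of $P_n$ is automatically a TD-set of $C_n$; and the run/gap counting silently assumes $m \geq 1$ and that distinct maximal runs are separated by nonempty gaps, both of which are immediate but worth a clause each.
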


\begin{thm}[\cite{1991Bondage}] For any path graph $P_n$ with $n \geq 4$, 
    \begin{equation*}
        \bondnum{P_n}{1} = \begin{cases}
        2 & \text{ if } n \equiv 2 \pmod{4},\\
        1 & \text{ otherwise}, \\
        \end{cases} 
    \end{equation*}

and $\bondnum{C_n}{1} = \bondnum{P_n}{1} + 1.$
\end{thm}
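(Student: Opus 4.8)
The plan is to treat the path and cycle claims separately, in each case combining Proposition~\ref{prop:p01} with Lemma~\ref{lem:tdnum_components}. The key observation is that deleting a non-pendant edge from $P_n$ splits it into two subpaths $P_a$ and $P_b$ with $a+b=n$ and $a,b\ge 2$, so by Lemma~\ref{lem:tdnum_components} the resulting total domination number is exactly $\tdnum{P_a}+\tdnum{P_b}$, a quantity we can read off from Proposition~\ref{prop:p01}. I would also record the elementary fact that deleting edges never decreases $\tdnum$, since a TD-set of a spanning subgraph is still a TD-set of the larger graph; this lets me replace ``does not increase'' by ``stays equal'' throughout.

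For paths, I would first handle $n\not\equiv 2\pmod 4$ by exhibiting a single good deletion: splitting off a $P_2$ leaves $P_2$ and $P_{n-2}$, and a short residue computation via Proposition~\ref{prop:p01} gives $\tdnum{P_2}+\tdnum{P_{n-2}}>\tdnum{P_n}$, so $\bondnum{P_n}{1}=1$. For $n\equiv 2\pmod 4$ I would instead show that no single deletion can work: writing $n=a+b$ with $a,b\ge 2$, the pair of residues of $a,b$ modulo $4$ must be one of $(0,2),(2,0),(1,1),(3,3)$, and in each case Proposition~\ref{prop:p01} yields $\tdnum{P_a}+\tdnum{P_b}=\tdnum{P_n}$, so $\bondnum{P_n}{1}\ge 2$. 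Deleting two edges to produce $P_2,P_2,P_{n-4}$ then strictly increases $\tdnum$, so $\bondnum{P_n}{1}=2$.

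For cycles I would reduce to the path result. Since $\tdnum{C_n}=\tdnum{P_n}$ by Proposition~\ref{prop:p01} and deleting any single edge of the edge-transitive graph $C_n$ yields $P_n$, the upper bound follows by first removing one edge to reach $P_n$ without changing $\tdnum$ and then applying an optimal set of $\bondnum{P_n}{1}$ deletions, giving $\bondnum{C_n}{1}\le \bondnum{P_n}{1}+1$. For the matching lower bound I would argue that any admissible deletion of $j\le \bondnum{P_n}{1}$ edges from $C_n$ can be viewed as deleting one edge (producing $P_n$) followed by the remaining $j-1<\bondnum{P_n}{1}$ deletions from that $P_n$; because the final graph has no isolated vertices, these form a legitimate deletion from $P_n$ of fewer than $\bondnum{P_n}{1}$ edges, which by definition cannot raise $\tdnum$. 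Hence $\tdnum$ remains at $\tdnum{C_n}$ and $\bondnum{C_n}{1}\ge \bondnum{P_n}{1}+1$.

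The main obstacle is the $n\equiv 2\pmod 4$ analysis for paths: establishing that no single deletion helps requires the exhaustive residue-case check above, and this is precisely where the exceptional ``$+1$'' in the $n\equiv 2$ case of Proposition~\ref{prop:p01} is doing the work. Everything else is bookkeeping with the given total domination values, and the cycle statement becomes a clean reduction once the path case is in hand.
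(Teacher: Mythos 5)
Your proof is correct, but note the paper itself never proves this statement: it is quoted from Kulli and Patwari \cite{1991Bondage}, and the paper's own machinery only kicks in at $k\ge 2$ (Theorem \ref{thm:BtkPaths}). What you have written is, in effect, the $k=1$ instance of exactly that machinery: your splitting of $P_n$ into $P_a + P_b$ and the residue bookkeeping via Lemma \ref{lem:tdnum_components} and Proposition \ref{prop:p01} is the same idea as Corollaries \ref{cor:p01} and \ref{cor:path_td_sum}, and your upper-bound construction (splitting off copies of $P_2$) is the paper's canonical optimal move. Your treatment of the cycle is actually more careful than the paper's: for general $k$ the paper disposes of $C_n$ with the single remark that $C_n \setminus \{e\} = P_n$, whereas you supply both directions — the upper bound by deleting one cycle edge (which leaves $\tdnum{}$ unchanged since $\tdnum{C_n}=\tdnum{P_n}$) and then playing an optimal path deletion, and the lower bound by factoring any admissible deletion of $j \le \bondnum{P_n}{1}$ edges from $C_n$ through the intermediate $P_n$, using monotonicity of $\tdnum{}$ under edge deletion and the minimality in the definition of $\bondnum{P_n}{1}$. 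All the arithmetic checks out, including the exhaustive residue pairs $(0,2),(2,0),(1,1),(3,3)$ showing no single deletion helps when $n \equiv 2 \pmod 4$, and the edge cases $n=4,6$ are covered by your constructions. So your write-up is a valid self-contained proof of a result the paper leaves to the literature.
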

Using Lemma \ref{lem:tdnum_components} and Proposition \ref{prop:p01} we can show the following result for the total domination number of the disjoint union of two paths.
\begin{cor}\label{cor:p01}
    Let $P_a$ and $P_b$ be disjoint paths with $a\geq2$ and $b\geq2$. 
    Then, 
    \begin{equation}\nonumber
    \tdnum{P_a + P_b} \leq \tdnum{P_{a+b-2} + P_2}.    
    \end{equation}
\end{cor}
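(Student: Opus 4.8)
The plan is to collapse both sides with the component formula, peel off the dominant linear term, and then settle a bounded arithmetic inequality by a residue-class check modulo $4$. First I would apply Lemma \ref{lem:tdnum_components} to each two-component disjoint union, obtaining $\tdnum{P_a + P_b} = \tdnum{P_a} + \tdnum{P_b}$ and $\tdnum{P_{a+b-2} + P_2} = \tdnum{P_{a+b-2}} + \tdnum{P_2}$. Since $\tdnum{P_2} = 2$ (both vertices are forced into any TD-set, or directly from Proposition \ref{prop:p01} with $n=2$), the claim is equivalent to the single-path inequality
\begin{equation*}
\tdnum{P_a} + \tdnum{P_b} \leq \tdnum{P_{a+b-2}} + 2.
\end{equation*}
The hypotheses $a, b \geq 2$ guarantee $a + b - 2 \geq 2$, so every path appearing is legitimate, with equality holding trivially when $a = b = 2$.

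Next I would extract the dominant term from Proposition \ref{prop:p01} by writing $\tdnum{P_n} = n/2 + g(n)$, where the correction $g(n)$ depends only on $n \bmod 4$: it equals $0$ when $n \equiv 0$, equals $1$ when $n \equiv 2$, and equals $\tfrac{1}{2}$ when $n$ is odd. Substituting and cancelling the common term $(a+b)/2$ from both sides (noting that the right-hand argument $a+b-2$ contributes $(a+b)/2 - 1$), the inequality reduces to
\begin{equation*}
g(a) + g(b) \leq 1 + g(a + b - 2).
\end{equation*}
This is the crucial simplification: the linear parts match exactly, so only the bounded corrections remain to be compared.

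Finally, since $g$ depends only on the residue mod $4$ and $(a + b - 2) \bmod 4$ is determined by $a \bmod 4$ and $b \bmod 4$, I would verify the reduced inequality by checking the residue pairs $(a \bmod 4,\, b \bmod 4)$, which by symmetry in $a$ and $b$ leaves only ten cases. In each case both sides are explicit constants in $\set{0, \tfrac{1}{2}, 1, \tfrac{3}{2}, 2}$ and the inequality holds, with equality in the tight cases such as $a \equiv b \equiv 2 \pmod 4$ (where both sides equal $2$). I do not expect a genuine obstacle here; the only mild care needed is performing the $n/2$-extraction cleanly, so that the residue bookkeeping stays transparent rather than degenerating into a comparison of full piecewise expressions.
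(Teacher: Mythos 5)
Your proposal is correct and follows essentially the same route as the paper: both reduce the claim to $\tdnum{P_a} + \tdnum{P_b} \leq \tdnum{P_{a+b-2}} + 2$ via Lemma \ref{lem:tdnum_components} and then verify it by a case analysis on $a, b \pmod 4$ using Proposition \ref{prop:p01}. Your only refinement is bookkeeping: extracting the linear term $n/2$ so that each case compares bounded constants $g(\cdot)$ rather than full piecewise expressions, which tidies the ten residue cases the paper handles (partially, with ``the rest follow similarly'') in raw form.
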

\begin{proof}
    Let $P_a$ and $P_b$ be disjoint paths such that $a\geq2$ and $b\geq2$. We can rewrite the target inequality as $\tdnum{P_a} + \tdnum{P_b} \leq \tdnum{P_{a+b-2}} + 2$ using Lemma \ref{lem:tdnum_components}. We will split this into cases based on $a \pmod{4}$ and $b \pmod{4}$. 
    \begin{case}[1]
        Let $a,b \equiv 0 \pmod{4}$.
                Then $\tdnum{P_a} + \tdnum{P_b} = \frac{a}{2} + \frac{b}{2} = \frac{a+b}{2}$, 
                meanwhile $\tdnum{P_{a+b-2}} + 2 = \frac{a+b-2}{2}+3 = \frac{a+b+4}{2}$. 
                Thus $\tdnum{P_a + P_b} < \tdnum{P_{a+b-2} + P_2}$.
    \end{case}
    \begin{case}[2]
        Let $a, b \equiv 2 \pmod{4}$.
        Then $\tdnum{P_a} + \tdnum{P_b} = \frac{a}{2} + \frac{b}{2} + 2= \frac{a+b+4}{2}$, meanwhile $\tdnum{P_{a+b-2}} +2 = \frac{a+b-2}{2} +3 = \frac{a+b+4}{2}$.
        Thus $\tdnum{P_a + P_b} = \tdnum{P_{a+b-2} + P_2}$. 
    \end{case}
\noindent The remaining cases follow similarly  using Proposition \ref{prop:p01}.
\end{proof}

The following result generalizes Corollary \ref{cor:p01} to the total domination number of any number of paths.
\begin{cor}\label{cor:path_td_sum}
If $\displaystyle n=\sum_{i=1}^{b+1}a_i$ with $a_i\geq 2$, then 
\begin{equation}\nonumber
\sum_{i=1}^{b+1}\gamma_t(P_{a_i})  \leq \gamma_t \left(P_{n-2b} + \sum_{i=1}^{b}P_2\right) = \gamma_t (P_{n-2b}) + 2b. 
\end{equation}
\end{cor}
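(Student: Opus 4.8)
The plan is to prove the inequality by induction on $b$, using Corollary \ref{cor:p01} as the engine that merges two paths at a time. The asserted equality on the right is immediate from Lemma \ref{lem:tdnum_components}: since the disjoint union $P_{n-2b} + \sum_{i=1}^{b} P_2$ has components $P_{n-2b}$ together with $b$ copies of $P_2$, and $\gamma_t(P_2)=2$, we obtain $\gamma_t(P_{n-2b} + \sum_{i=1}^{b} P_2) = \gamma_t(P_{n-2b}) + 2b$. Hence I would concentrate entirely on the inequality $\sum_{i=1}^{b+1}\gamma_t(P_{a_i}) \leq \gamma_t(P_{n-2b}) + 2b$.

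For the base case I would take $b=0$: there is a single path $P_{a_1}=P_n$, and both sides equal $\gamma_t(P_n)$, so the claim holds with equality. (One could equally take $b=1$ as the base case, which is precisely Corollary \ref{cor:p01}.) For the inductive step, assume the statement for $b-1$. Given $P_{a_1},\dots,P_{a_{b+1}}$ with $\sum_{i=1}^{b+1}a_i=n$ and each $a_i\geq 2$, set $m=\sum_{i=1}^{b}a_i=n-a_{b+1}$ and apply the induction hypothesis to the first $b$ paths to get
\[
\sum_{i=1}^{b}\gamma_t(P_{a_i}) \leq \gamma_t(P_{m-2(b-1)}) + 2(b-1).
\]
Adding $\gamma_t(P_{a_{b+1}})$ to both sides and then applying Corollary \ref{cor:p01} to the two paths $P_{m-2(b-1)}$ and $P_{a_{b+1}}$ merges them into a single path $P_{n-2b}$ at an additive cost of $2$, since $\big(m-2(b-1)\big)+a_{b+1}-2 = n-2b$. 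This yields $\sum_{i=1}^{b+1}\gamma_t(P_{a_i}) \leq \gamma_t(P_{n-2b}) + 2b$, completing the induction.

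The only point requiring care, and the main (mild) obstacle, is verifying the hypotheses of Corollary \ref{cor:p01} at the merging step, namely that both $P_{m-2(b-1)}$ and $P_{a_{b+1}}$ have order at least $2$. Each $a_i\geq 2$ forces $m\geq 2b$, so the merged path has order $m-2(b-1)=m-2b+2\geq 2$, while $a_{b+1}\geq 2$ by hypothesis; thus the corollary applies at every stage. An equivalent, and perhaps more transparent, formulation dispenses with induction: starting from the disjoint union $\sum_{i=1}^{b+1}P_{a_i}$, repeatedly merge two components via Corollary \ref{cor:p01}. Each of the $b$ merges reduces the number of components by one, increases the bound by $2$, and shrinks the total length by $2$, so after all $b$ merges one is left with a single path $P_{n-2b}$ and an accumulated surplus of $2b$, which is exactly the claimed bound.
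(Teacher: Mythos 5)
Your proposal is correct and takes essentially the same approach as the paper: the paper's entire proof of this corollary is the one-line remark that it follows from Corollary \ref{cor:p01} by induction. Your write-up supplies exactly the details of that induction (merging one path at a time via Corollary \ref{cor:p01}, checking that $P_{m-2(b-1)}$ and $P_{a_{b+1}}$ each have order at least $2$), plus the observation that the right-hand equality is immediate from Lemma \ref{lem:tdnum_components}.
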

\begin{proof}
 This follows from Corollary \ref{cor:p01} by induction. \qedhere
\end{proof}

Corollary \ref{cor:path_td_sum} implies that the maximum value of the total domination number for a path of length $n$ with $b$ edges removed can be achieved when edge removals produce a $P_{n-2b}$ along with $b$ disjoint copies of a $P_2$.
We can use this result to find the $k$-total bondage numbers for paths and cycles.

\begin{thm}\label{thm:BtkPaths} For any integer $k \geq 2$ and any $n \geq 2k$, 
\begin{equation*}
\bondnum{P_n}{k} = 
\begin{cases}
    2\left\lfloor \frac{k-1}{2} \right \rfloor +1, & n\equiv 0 \pmod{4},\\
    k, & n \equiv 1,3 \pmod{4},\\
    2\left\lfloor \frac{k-1}{2} \right \rfloor +2, & n\equiv 2 \pmod{4}, 
\end{cases}
\end{equation*}
and $\bondnum{C_n}{k}= \bondnum{P_n}{k}+1.$
\end{thm}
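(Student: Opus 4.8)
The plan is to reduce the computation to a one-parameter optimization and then read off the answer from a simple ``marginal gain'' pattern. First I would observe that deleting $b$ edges from $P_n$ (a tree) always yields a disjoint union of exactly $b+1$ subpaths, each on at least two vertices, since no isolated vertices are permitted. By Corollary~\ref{cor:path_td_sum}, among all such deletions the total domination number is maximized by the configuration $P_{n-2b}+b\,P_2$, so that
\begin{equation*}
\max_{|F|=b}\tdnum{P_n\setminus F}=\tdnum{P_{n-2b}}+2b=:f(b).
\end{equation*}
Since $f$ records the best achievable value after $b$ deletions, $\bondnum{P_n}{k}$ is exactly the least $b$ with $f(b)-\tdnum{P_n}\ge k$. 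In particular the lower bound, namely that fewer deletions cannot suffice, is immediate from this maximality, so only the exact threshold remains to be located.

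Next I would analyze the increments $f(b+1)-f(b)$. Writing $m=n-2b$ and using Proposition~\ref{prop:p01}, a short case check on $m \bmod 4$ shows that $\tdnum{P_{m-2}}-\tdnum{P_m}$ equals $0,-2,-1,-1$ for $m\equiv 0,2,1,3 \pmod 4$ respectively, so each additional deletion raises $f$ by $2,0,1,1$ in those four cases. Because $m=n-2b$ toggles between $n$ and $n-2$ modulo $4$ as $b$ alternates parity, the cumulative gain $f(b)-f(0)$ equals $2\ceil{b/2}$ when $n\equiv 0$, equals $2\floor{b/2}$ when $n\equiv 2$, and equals $b$ when $n$ is odd. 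Solving each inequality ``cumulative gain $\ge k$'' for the smallest admissible $b$ then yields $2\floor{(k-1)/2}+1$, $2\floor{(k-1)/2}+2$, and $k$, matching the three stated cases.

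For the cycle I would exploit $\tdnum{C_n}=\tdnum{P_n}$ from Proposition~\ref{prop:p01}. Deleting $b\ge 1$ edges from $C_n$ yields a forest on $n$ vertices with $n-b$ edges, hence exactly $b$ subpaths summing to $n$; applying Corollary~\ref{cor:path_td_sum} with $b-1$ in place of $b$ shows the maximal total domination attainable after $b$ deletions is precisely $f(b-1)$. Thus the cycle threshold is the least $b$ with $f(b-1)-\tdnum{P_n}\ge k$, which is one more than the path threshold, giving $\bondnum{C_n}{k}=\bondnum{P_n}{k}+1$.

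The hard part will be the realizability of the extremal configuration rather than the counting: the formula for the minimal $b$ is only meaningful if those $b$ edges can actually be removed without isolating a vertex, that is, if $n-2b\ge 2$ (equivalently $b\le n/2-1$). Since the optimal $b$ can be as large as roughly $k+1$ in the $n\equiv 2$, $k$ odd subcase, this is exactly where the hypothesis $n\ge 2k$ must do its work, and the boundary values in each residue class should be checked individually to confirm that the required number of cuts stays within the available edge budget. I expect verifying this realizability at the low end of the range to be the step demanding the most care, since a forest of paths on $n$ vertices has total domination at most $n$ (at most $n-1$ when $n$ is odd), so the target increase is genuinely unattainable if $n$ is too small relative to $k$.
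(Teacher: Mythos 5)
You take the same route as the paper: both arguments rest on Corollary~\ref{cor:path_td_sum}, which identifies $P_{n-2b}+b\,P_2$ as the extremal configuration after $b$ deletions, and then locate the least $b$ with $\tdnum{P_{n-2b}}+2b-\tdnum{P_n}\geq k$ using Proposition~\ref{prop:p01}. In two respects your write-up is more complete than the paper's. First, the paper never carries out the threshold computation; your marginal gains $2,0,1,1$ according to $n-2b\equiv 0,2,1,3\pmod{4}$, and the cumulative sums $2\ceil{b/2}$, $2\floor{b/2}$, $b$, are correct and do yield the three stated expressions. Second, for cycles the paper only remarks that $C_n\setminus\{e\}=P_n$, which by itself gives $\bondnum{C_n}{k}\leq\bondnum{P_n}{k}+1$; your observation that deleting $b\geq 1$ edges from $C_n$ leaves exactly $b$ paths, so that the attainable maximum is your $f(b-1)$, is what supplies the matching lower bound.

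The step you deferred, however, is a genuine gap, and it cannot be closed under the stated hypothesis $n\geq 2k$. For $n$ odd the formula calls for $b=k$ deletions, which requires $n-2k\geq 2$; at $n=2k+1$ this fails, and in fact no deletion set works: a disjoint union of paths on an odd number $n$ of vertices has total domination number at most $n-1=2k$, so the increase over $\tdnum{P_n}=k+1$ is at most $k-1<k$. Thus $\bondnum{P_5}{2}$ does not exist, while the theorem asserts it equals $2$. Likewise for $n=2k$ with $k$ odd (so $n\equiv 2\pmod{4}$): the maximum attainable total domination number is $n=2k$ (all copies of $P_2$), the increase is again at most $k-1$, and $\bondnum{P_6}{3}$ does not exist although the theorem claims the value $4$. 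So your instinct about where the real work lies was exactly right, and carrying it out shows the theorem needs a stronger hypothesis, e.g.\ $n\geq 2k+2$ (equivalently, excluding $n=2k+1$ and the pairs $n=2k$ with $k$ odd); under that hypothesis the optimal $b$ always satisfies $n-2b\geq 2$ in every residue class, and your threshold analysis then constitutes a complete proof for both $P_n$ and $C_n$. The paper's own proof is silent on this point, and its statement is incorrect at those boundary pairs.
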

\begin{proof}
    By Corollary \ref{cor:path_td_sum}, to increase $\tdnum{P_n}$ by removing the fewest edges, we will produce disjoint copies of $P_2$.
    Thus for each $n$, we can find the smallest $i$ such that 
\begin{equation}\label{eq:path_tdnum}
\tdnum{P_{n-2i}}+2i - \tdnum{P_n} \geq k.    
\end{equation}
    Applying Proposition \ref{prop:p01} to Equation \ref{eq:path_tdnum} provides the result for $\bondnum{P_n}{k}$. The result for $\bondnum{C_n}{k}$ follows directly from the result for paths since $C_n \setminus \{e\} = P_n$.
\end{proof}

\subsection{Wheel Graphs}
Now we will consider wheel graphs which we define below. 

\begin{defn}
    A wheel graph $W_n$ is a graph on $n+1$ vertices constructed by joining a single vertex and a cycle $C_n$.
\end{defn}

It is easy to see that $\tdnum{W_n} = 2$.  In addition, total bondage number of wheel graphs is given by the following: 
\begin{thm}\label{BondageWheels}(\cite{1991Bondage})
    For any wheel graph $W_n$ with $n \geq 5 $, 
   \begin{equation*}
     \bondnum{W_n}{1} = 2. 
   \end{equation*}
\end{thm}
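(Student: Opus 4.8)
The plan is to establish the two inequalities $\bond{W_n}\ge 2$ and $\bond{W_n}\le 2$ separately, after first pinning down exactly which two-element sets can totally dominate $W_n$. Write $h$ for the hub and $v_1,\dots,v_n$ for the rim cycle, with indices read modulo $n$. Since a set $\set{x,y}$ of size two is a TD-set precisely when $xy$ is an edge and $N(x)\cup N(y)=V(W_n)$, I would first observe that the spokes $\set{h,v_i}$ all qualify (the hub covers the whole rim and $v_i$ covers the hub), consistent with $\tdnum{W_n}=2$. The key structural claim is that for $n\ge 5$ these are the \emph{only} size-two TD-sets: a rim edge $v_iv_{i+1}$ has $N(v_i)\cup N(v_{i+1})=\set{h,v_{i-1},v_i,v_{i+1},v_{i+2}}$, which omits a rim vertex once $n\ge 5$, so no rim edge dominates. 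I would also record the monotonicity fact that deleting edges can only shrink neighborhoods and destroy adjacencies, so every size-two TD-set of $W_n\setminus F$ is already a size-two TD-set of $W_n$; hence raising the total domination number above $2$ is equivalent to destroying all $n$ spoke TD-sets.

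For the lower bound I would show that a single deletion always leaves a spoke intact. Deleting a rim edge keeps every spoke $\set{h,v_i}$ a TD-set, since spokes dominate independently of the rim. Deleting a spoke $hv_i$ leaves $\set{h,v_{i+1}}$ a TD-set: every $v_j$ with $j\ne i$ is still covered by $h$, and $v_i$ is covered by $v_{i+1}$ through the intact rim edge $v_iv_{i+1}$, while $h$ and $v_{i+1}$ remain mutually adjacent. Thus $\tdnum{W_n\setminus\set{e}}=2$ for every non-pendant edge $e$, giving $\bond{W_n}\ge 2$.

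For the upper bound I would delete the two consecutive spokes $hv_1$ and $hv_2$ and check that no spoke TD-set survives. The sets $\set{h,v_1}$ and $\set{h,v_2}$ fail because the hub loses its only potential in-set neighbor. For $3\le i\le n$ at least one of $v_1,v_2$ is undominated by $\set{h,v_i}$: the vertex $v_1$ can only be covered by $v_i$ when $i\in\set{2,n}$, and $v_2$ only when $i\in\set{1,3}$, and for $n\ge 5$ these options cannot simultaneously handle both (for instance $i=3$ leaves $v_1$ uncovered, and $i=n$ leaves $v_2$ uncovered). Since the deletion isolates no vertex and, by the monotonicity observation, no new size-two TD-set can appear, we get $\tdnum{W_n\setminus\set{hv_1,hv_2}}\ge 3$ and hence $\bond{W_n}\le 2$. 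Combining the two bounds yields $\bondnum{W_n}{1}=\bond{W_n}=2$.

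I expect the main obstacle to be the structural characterization of size-two TD-sets — specifically verifying that rim edges fail for all $n\ge 5$ and organizing the case check that two consecutive spoke deletions kill every remaining spoke set — rather than the single-edge lower bound, which is routine.
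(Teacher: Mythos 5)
Your proof is correct, but there is nothing in the paper to compare it against: Theorem \ref{BondageWheels} is stated with a citation to Kulli and Patwari \cite{1991Bondage} and the paper gives no proof of it. Your argument is a valid self-contained verification. The structural lemma (for $n\geq 5$ the only two-element TD-sets of $W_n$ are the spoke pairs $\set{h,v_i}$, since a rim edge covers only five of the $n+1\geq 6$ vertices), together with the monotonicity observation that edge deletion cannot create new two-element TD-sets, correctly reduces the problem to destroying all $n$ spoke pairs; your lower-bound case check (one deletion always leaves a spoke pair intact) and upper-bound construction (deleting the consecutive spokes $hv_1$, $hv_2$ kills every spoke pair because covering $v_1$ forces $i\in\set{2,n}$ while covering $v_2$ forces $i\in\set{1,3}$) are both sound, including the verification that no vertex is isolated. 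It is worth noting that your approach differs from the paper's treatment of the generalization: the paper's later theorem that $\bondnum{W_n}{k-1}=k$ for $n\geq 3k$ recovers this statement at $k=2$ only for $n\geq 6$, and its lower bound there is a greedy argument (choose, for each removed spoke $v_0v_i$, a surviving neighbor of $v_i$, preferring one still adjacent to the hub) rather than a classification of minimum TD-sets. Your classification-based argument is more elementary, covers the boundary case $n=5$ that the general theorem misses, and pins down exactly which deletions can succeed; the paper's greedy argument, by contrast, scales to all $k$ without needing to enumerate candidate TD-sets.
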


We now generalize this result by finding the $k$-total bondage for certain values of $k$.  
 
\begin{thm}\label{wheels}
    Consider a wheel $W_n$. For even $n$, the total domination number can increase by at most $n-2$, and 
   \begin{equation*}
     \bondnum{W_n}{n-2} = 2n - \frac{n+4}{2}. 
   \end{equation*}
    For odd $n$, the total domination number can increase by at most $n-1$, and
\begin{equation*}
    \bondnum{W_n}{n-1} = 2n- \frac{n+1}{2}.
\end{equation*}
\end{thm}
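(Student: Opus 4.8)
The plan is to reduce everything to a decomposition argument on the components of the edge-deleted graph, using Lemma~\ref{lem:tdnum_components} together with two extremal facts: (i) a component $C$ satisfies $\gamma_t(C)=|C|$ if and only if $C\cong P_2$; and (ii) a connected graph on $m\ge 3$ vertices has $\gamma_t\le 2m/3$ (Henning; see \cite{TotalDomTextbook}), so in particular $\gamma_t(C)\le |C|-1$ for $|C|\ge 3$, with equality forcing $|C|=3$. Since deleting edges never removes vertices, every graph obtained from $W_n$ by edge deletions is a spanning subgraph $H$ on the same $n+1$ vertices, and I would track the \emph{deficiency} $d(C)=|C|-\gamma_t(C)\ge 0$ of each component, noting that $\sum_C d(C)=(n+1)-\gamma_t(H)$, that $d(C)=0$ exactly when $C\cong P_2$, and that $d(C)\ge |C|/3\ge 1$ whenever $|C|\ge 3$.

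First I would pin down the maximum attainable total domination number, which simultaneously gives the maximal increase. Because $\gamma_t(H)\le |V(H)|=n+1$ always, and for odd $n$ the count $n+1$ is even, the maximum is $n+1$, attained exactly when every component is a $P_2$; the increase is $n-1$. For even $n$ the count $n+1$ is odd, so at least one component has odd order $\ge 3$ and hence positive deficiency, forcing $\gamma_t(H)\le n$; the increase is at most $n-2$. This matches the two regimes in the statement.

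Next I would exhibit the extremal subgraphs and count edges, recalling that $W_n$ has $2n$ edges. For odd $n$, match the hub to one rim vertex $v_1$ and take a perfect matching of the remaining rim path $v_2\cdots v_n$ (which has even length $n-1$); this leaves $(n+1)/2$ edges, so $2n-(n+1)/2$ deletions suffice. For even $n$, keep the triangle on the hub and two consecutive rim vertices $v_1,v_2$ together with a perfect matching of the remaining rim path $v_3\cdots v_n$; this leaves $3+(n-2)/2=(n+4)/2$ edges and realizes $\gamma_t=n$, so $2n-(n+4)/2$ deletions suffice.

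The decisive step—and the one I expect to be the main obstacle—is the matching lower bound, namely that no spanning subgraph achieving the maximal $\gamma_t$ can retain more edges than these constructions. Here the deficiency bookkeeping does the work. For odd $n$, $\gamma_t(H)=n+1$ forces every component to be a $P_2$, so $H$ is a perfect matching with exactly $(n+1)/2$ edges and the count is forced. For even $n$, $\gamma_t(H)=n$ forces $\sum_C d(C)=1$, so exactly one component $C^\star$ has $d(C^\star)=1$ and every other component is a $P_2$; the bound $\gamma_t(C^\star)\le 2|C^\star|/3$ then forces $|C^\star|=3$, and a three-vertex component carries at most three edges. Hence every optimal $H$ retains at most $3+(n-2)/2=(n+4)/2$ edges, so at least $2n-(n+4)/2$ deletions are required. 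The delicate points to verify carefully are that the $2m/3$ inequality is applied correctly to conclude $|C^\star|=3$, that the exhibited subgraphs genuinely sit inside $W_n$, have no isolated vertices, and have the claimed component total domination numbers, and the routine reductions of $2n-(n+1)/2$ and $2n-(n+4)/2$ to the displayed forms.
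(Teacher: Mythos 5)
Your proof is correct, and its two extremal constructions coincide with the paper's (for odd $n$, a perfect matching of $W_n$; for even $n$, a triangle at the hub plus a perfect matching of the remaining $n-2$ rim vertices). The difference lies in the lower bound, which is where the real content is. The paper's optimality argument is essentially an assertion: it states that a perfect matching is ``the only way'' all vertices can lie in a TD-set, and that ``a $C_3$ allows for the maximum number of edges between the three vertices,'' tacitly assuming that in any optimal subgraph the unique non-$P_2$ component has exactly three vertices --- a claim the paper never justifies. Your deficiency bookkeeping supplies exactly that step: $\gamma_t(H)=n$ forces $\sum_C d(C)=1$, hence a single component $C^\star$ with $d(C^\star)=1$ and all other components isomorphic to $P_2$, and the bound $\gamma_t \le 2m/3$ for connected graphs of order $m\ge 3$ (due to Cockayne, Dawes and Hedetniemi \cite{Totaldom1980}; see also \cite{TotalDomTextbook}) gives $|C^\star|\le 3$, hence $|C^\star|=3$ and at most three edges in $C^\star$. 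It is worth noting that the elementary bound $\gamma_t(C)\le |C|-1$ would \emph{not} suffice here, since it does not rule out deficiency-one components of order four or more, so importing the $2m/3$ theorem (or some equivalent structural fact) is genuinely necessary. In short: same constructions and the same overall strategy, but your version closes a gap that the paper's proof leaves open, at the modest cost of invoking one known external result.
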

\begin{proof}
    Note that $\tdnum{W_n} = 2$ for all $n$. For odd $n$, we can remove edges so that we are just left with $\frac{n+1}{2}$ copies of $P_2$, as shown in Figure \ref{fig:thm3a}. Then all $n+1$ vertices are in the total dominating set. This is the only way all vertices can be in the total dominating set, and $|E(W_n)| = 2n$, so $\bondnum{W_n}{n-1} = 2n - \frac{n+1}{2}$.
    For even $n$, we can remove edges so that we are just left with $\frac{n-2}{2}$ copies of $P_2$ and one $C_3$, as shown in Figure \ref{fig:thm3b}.
    Since $n+1$ is even, not all vertices can be in the total dominating set.
    A $C_3$ allows for the maximum number of edges between the three vertices.
    In this case, we are left with $\frac{n-2}{2}+6 = \frac{n+4}{2}$ edges, so $\bondnum{W_n}{n-2} = 2n - \frac{n+4}{2}$.
\end{proof}

\begin{figure}[h]
    \centering    
    \begin{subfigure}{0.4\textwidth}
    \begin{center}
        \begin{tikzpicture}
            \foreach \i in {1,...,7} {
                \node[tdvert] (\i) at (\i*51.4:2) {};
            }
            \node[tdvert] (0) at (0,0) {};
            \graph {
                (1) -- (2), (3) -- (4), (5) -- (6), (7) -- (0);
            };
        \end{tikzpicture}
        \caption{$\bondnum{W_7}{6}=10$}
        \label{fig:thm3a}
    \end{center}
    \end{subfigure}
    \begin{subfigure}{0.4\textwidth}
    \begin{center}
        \begin{tikzpicture}
            \foreach \i in {1,...,8} {
                \node[tdvert] (\i) at (\i*45:2) {};
            }
            \node[draw, circle] (0) at (0,0) {};
            \graph[nodes={vertex}, empty nodes] {
                (1) -- (2), (3) -- (4), (5) -- (6), (7) --(8),
                (0) -- {(7),(8)};
            };
        \end{tikzpicture}
    \end{center}
        \caption{$\bondnum{W_8}{6}=10$}
        \label{fig:thm3b}
    \end{subfigure}
    
    \caption{Wheels with the maximum increase to their total domination number}
\end{figure}
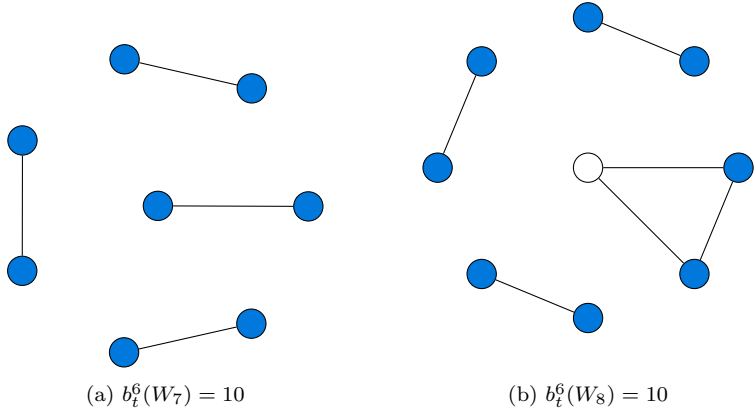
\begin{thm}
    For a wheel $W_n$ where $n \geq 3k$,
    \begin{equation*}
        \bondnum{W_n}{k-1} = k.
    \end{equation*}
\end{thm}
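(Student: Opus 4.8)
The plan is to prove the two inequalities $\bondnum{W_n}{k-1}\le k$ and $\bondnum{W_n}{k-1}\ge k$ separately, using throughout that $\tdnum{W_n}=2$ (the hub together with any rim vertex) and the basic dichotomy for a total dominating set $S$: either the hub lies in $S$, in which case it dominates every rim vertex whose spoke survives, or it does not, in which case $S$ must totally dominate the rim cycle and hence $|S|\ge\tdnum{C_n}\ge n/2$.

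For the upper bound I would exhibit $k$ deletions that raise the total domination number to $k+1$. Since $n\ge 3k$, I can choose $k$ rim vertices $w_1,\dots,w_k$ pairwise at distance at least $3$ on the cycle and delete their spokes. A total dominating set of size $k+1$ is then the hub together with one rim-neighbor of each $w_i$, so $\tdnum{W_n}\le k+1$ after deletion. For the reverse bound, if $S$ is a TD-set containing the hub, each $w_i$ still needs a neighbor in $S$, and because the $w_i$ are far apart the pairs of rim-neighbors $\{a_i,b_i\}$ are pairwise disjoint; this forces $k$ distinct rim vertices into $S$ beyond the hub, so $|S|\ge k+1$. If instead $S$ omits the hub then $|S|\ge\tdnum{C_n}\ge n/2\ge 3k/2\ge k+1$. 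Hence this graph has total domination number exactly $k+1$, an increase of $k-1$, giving $\bondnum{W_n}{k-1}\le k$.

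For the lower bound I must show that deleting any $k-1$ edges leaves the total domination number at most $k$. First I would observe that deleting rim edges alone never raises $\gamma_t$, since the hub together with any surviving rim vertex remains a TD-set; thus only deleted spokes matter. Write $s\le k-1$ for the number of deleted spokes and let $W$ be the resulting set of ``problem'' rim vertices, those no longer joined to the hub. Placing the hub in $S$ dominates every other rim vertex, so it remains to dominate $W$, to dominate the hub (some surviving-spoke vertex must lie in $S$), and to satisfy the self-domination requirement on the added vertices. I would then build $S$ by cases on how the problem vertices sit along the rim: when a problem vertex has a surviving-spoke rim-neighbor I assign that neighbor, which the hub dominates automatically and which can double as the hub's own dominator; when problem vertices occur in consecutive runs I dominate a run by flanking surviving vertices together with adjacent pairs inside the run, so each added vertex of $W$ is dominated by its partner. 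A count then shows the total stays at most $k$, the extremal configuration being $k-1$ well-separated spokes, which yields exactly $\gamma_t=k$.

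The main obstacle is this last case analysis. The delicate point is a run of consecutive problem vertices whose interior vertices have no surviving-spoke neighbor: dominating them forces vertices of $W$ into $S$, and those vertices then themselves demand a neighbor in $S$, so a naive ``one dominator per problem vertex'' bound overshoots $k$. Resolving it requires pairing adjacent problem vertices and, when a run is cut off entirely by rim deletions, treating that run as a path component and invoking Lemma~\ref{lem:tdnum_components} together with Proposition~\ref{prop:p01}; the budget of $k-1$ total deletions together with $n\ge 3k$ is exactly what makes the bookkeeping close at $k$. Checking that no placement of the $k-1$ deletions beats the well-separated-spoke configuration is where essentially all the work lies, and I would verify the small cases $k=2$ (which is Theorem~\ref{BondageWheels}) and $k=3$ directly.
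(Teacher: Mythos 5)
Your upper bound is sound and in fact more careful than the paper's: the paper removes the same well-separated set of $k$ spokes (at $v_i$, $i\equiv 0 \pmod 3$) but merely asserts the resulting total domination number is $k+1$, whereas your disjoint-pairs argument for hub-containing sets, plus the bound $|S|\ge \gamma_t(C_n)\ge n/2\ge k+1$ for hub-omitting sets, actually proves it. The genuine gap is in the lower bound, and you flag it yourself: you never carry out the case analysis, deferring ``essentially all the work'' to unexecuted bookkeeping over runs and to direct verification of $k=2,3$. As written, that half is a plan, not a proof, and a plan organized around an obstacle that does not exist.

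Specifically, your claim that a naive ``one dominator per problem vertex'' count overshoots $k$ is wrong, and seeing why closes the proof with no casework over placements — this is exactly the paper's argument. Put the hub $v_0$ in $S$, and for each spoke-deleted vertex $v_i$ add one surviving neighbor $u_i$, preferring one whose own spoke survives. Every problem vertex is then dominated by its assigned $u_i$, and every added $u_i$ is either hub-adjacent (dominated by $v_0$) or is itself a problem vertex, in which case it is dominated by \emph{its own} assigned dominator; so the assignment is self-consistent and $|S|\le 1+s\le 1+(k-1)=k$. The only remaining issue is dominating the hub. That fails only if no $u_i$ can be chosen hub-adjacent, which forces every maximal run of problem vertices to have both of its boundary rim edges deleted, i.e., to be a component disconnected from the hub; each such component costs two rim-edge deletions beyond the spokes, so $s\le k-3$ and one extra hub-neighbor may be added, giving $|S|\le 1+(k-3)+1=k-1$. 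No appeal to Lemma~\ref{lem:tdnum_components}, Proposition~\ref{prop:p01}, or small cases is needed. (Also, your opening reduction ``only deleted spokes matter'' is too strong as stated: rim deletions matter precisely because they can sever a run of problem vertices and trigger the extra vertex above, though you do partially return to this point later.)
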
   

\begin{proof}
    First we show by construction that $\bondnum{W_n}{k-1} \leq k$. Label the vertices of $W_n$ from $\{v_0, v_1, \ldots ,v_n\}$ such that $\deg(v_0) = n$. Remove the set of $k$ edges $\{v_0v_i \mid i\equiv 0 \pmod{3}, 1 \leq i \leq 3k\}$. The total domination number would then be $k+1$, hence an increase of $k-1$.

    Next we show that when we remove less than $k$ edges, we can always find a TD-set of cardinality at most $k$.
First let $v_0$ be in the TD-set.
For each vertex $v_i$ where we have removed $v_0v_i$, we can choose a neighbor to add to the TD-set since $v_i$ can not be isolated.
When possible, choose a neighbor of $v_i$ that is adjacent to $v_0$ so that $v_0$ is adjacent to another vertex in the TD-set.
If along a sequence of vertices it is never possible to choose a sequence of neighbors to find one adjacent to $v_0$, that means we must have disconnected a component of the graph. Disconnecting this component would require removing two edges from $W_n \setminus \{v_0\}$, so we can choose an arbitrary vertex that is still connected to $v_0$ to add to our TD-set. Using this strategy we have created a TD-set of cardinality at most $k$ after removing less than $k$ edges. 

\end{proof}
\subsection{Complete Graphs}
It is clear that $\gamma_t(K_n) = 2$.  In \cite{1991Bondage} Kulli and Patwari state the following for complete graphs. 

\begin{thm}[\cite{1991Bondage}]\label{thm:Kulli}
    Let $K_n$ be a complete graph with $n\geq 5$ vertices. Then, 
    \begin{equation*}
        \bondnum{K_n}{1} = 2n-5.
    \end{equation*}
\end{thm}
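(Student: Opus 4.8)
The plan is to recast the whole question in terms of the deleted edge set $F$, viewed as a graph on $V(K_n)$. First I would record the elementary fact that $\tdnum{K_n\setminus F}\ge 3$ means $H:=K_n\setminus F$ has no total dominating set of size $2$; since a singleton is never a TD-set, a size-$2$ TD-set must be a pair of adjacent vertices $u,v$ with $N_H(u)\cup N_H(v)=V$. Translating adjacency in $H$ into non-adjacency in $F$, the pair $\set{u,v}$ fails to totally dominate precisely when $uv\notin F$ yet some $w$ has $uw,vw\in F$, i.e.\ $u,v$ have a common neighbour in $F$. Hence $\tdnum{H}\ge 3$ holds if and only if every two non-adjacent vertices of $F$ have a common neighbour in $F$, that is, $F$ has diameter at most $2$. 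The requirement that $H$ have no isolated vertex is exactly that $F$ have maximum degree at most $n-2$. Thus $\bondnum{K_n}{1}$ equals the minimum number of edges in a diameter-$2$ graph on $n$ vertices with no vertex of degree $n-1$, and the theorem reduces to showing this minimum is $2n-5$.

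For the upper bound I would exhibit such an $F$ with $2n-5$ edges. Take a $5$-cycle on $\set{v_1,\dots,v_5}$ and join each of the remaining $n-5$ vertices to the two non-adjacent vertices $v_1,v_3$, which form a (closed) dominating pair of the cycle. This uses $5+2(n-5)=2n-5$ edges, and the only large-degree vertices are $v_1,v_3$, each of degree $n-3\le n-2$, so $F$ has no universal vertex. A direct check that every pair lies within distance $2$ — pairs inside the $C_5$ do; each attached vertex reaches everything through $v_1$ or $v_3$; two attached vertices share the neighbour $v_1$ — shows $\operatorname{diam}(F)\le 2$. Hence deleting $F$ forces $\tdnum{K_n\setminus F}\ge 3$, giving $\bondnum{K_n}{1}\le 2n-5$.

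The lower bound is the crux, and is exactly the Erd\H{o}s--R\'enyi extremal estimate for diameter-$2$ graphs with no universal vertex. I would argue by the minimum degree $\delta$ of $F$. Connectivity (forced by diameter $2$) gives $\delta\ge 1$, and $\delta=1$ is impossible, since the unique neighbour of a degree-$1$ vertex would have to be adjacent to all remaining vertices and hence be universal. If $\delta\ge 4$ then $|E(F)|\ge\tfrac12\delta n\ge 2n>2n-5$ and we are done, which isolates the genuinely delicate regime $\delta\in\set{2,3}$. There I would take a vertex $u$ of minimum degree, observe that its $\delta$ neighbours must dominate $V\setminus\set{u}$ (each non-neighbour of $u$ needs a common neighbour with $u$, lying in $N(u)$), and then carefully account for the additional edges forced by the pairs that this small dominating set does not itself connect within distance $2$; it is exactly these ``cross'' pairs that push the count up to $2n-5$.

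I expect the $\delta=2$ case — where $F$ resembles the extremal $C_5$-type graph built above — to be the main obstacle, requiring a subcase analysis according to how the two neighbours of $u$ split the remaining vertices and whether they are mutually adjacent. A complementary large-degree shortcut keeps the bookkeeping bounded: if some vertex has degree $n-2$, so that it has a single non-neighbour, a two-layer edge count (its $n-2$ incident edges, plus the edges from the non-neighbour, plus the edges needed to place each remaining vertex within distance $2$ of that non-neighbour) already yields $|E(F)|\ge 2n-4$. Combining these cases gives $|E(F)|\ge 2n-5$ in all regimes, which together with the construction establishes $\bondnum{K_n}{1}=2n-5$.
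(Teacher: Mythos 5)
Your reformulation in the complement is correct and clean: writing $F$ for the set of deleted edges, $\tdnum{K_n\setminus F}\ge 3$ holds exactly when every two $F$-non-adjacent vertices have a common $F$-neighbour, and the no-isolated-vertex requirement is exactly $\Delta(F)\le n-2$; your $C_5$-based construction then gives $\bondnum{K_n}{1}\le 2n-5$ (it is essentially the complement of Kulli--Patwari's construction, which the paper accepts as correct). The genuine gap is the lower bound, which is the entire content of the theorem --- and precisely the part of Kulli--Patwari's original argument that the paper's Remark identifies as flawed and that the paper reproves in full. In the regime you yourself isolate as delicate, $\delta(F)\in\set{2,3}$, you never carry out the argument: ``carefully account for the additional edges forced by the pairs'' and ``it is exactly these cross pairs that push the count up to $2n-5$'' restate the goal rather than prove it. Concretely, for $\delta(F)=2$ the edges you have exhibited (two at the minimum-degree vertex $u$, at least $n-3$ from $N(u)$ to the rest) total only about $n-1$, so roughly $n-4$ further edges must be shown to be forced among the cross pairs; establishing this is the nontrivial Erd\H{o}s--R\'enyi extremal argument, with its own case analysis. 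For $\delta(F)=3$ the trivial bound $\tfrac{3n}{2}$ falls below $2n-5$ once $n\ge 11$, so that case is also genuinely open in your write-up; your degree-$(n-2)$ shortcut does not cover it. Unless you instead invoke the Erd\H{o}s--R\'enyi theorem on the minimum size of a diameter-$2$ graph with no universal vertex as a citable black box --- a legitimate but different, citation-based route --- the proof is incomplete exactly where it matters.

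For comparison, the paper works in $H=K_n\setminus S$ directly and runs a minimal-counterexample argument over $n$ with cases on $\Delta(H)$; since $\Delta(H)=n-1-\delta(F)$, its case structure mirrors yours under complementation. Its Case 2 ($\Delta(H)=n-3$, i.e.\ your $\delta(F)=2$) is where the real work happens: it passes to $G''=G\setminus\set{x,a,b}$, partitions $V(G'')$ into $X_0,X_1,X_2$ by adjacency to $a'$ and $b'$, and derives $|X_0|=0$ from an exact count of missing edges; its Case 3 ($\Delta(H)\le n-4$, your $\delta(F)\ge 3$) is settled by a degree-sum count producing a common neighbour $z$ of $x,a,b,c$. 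Those two completed cases are exactly the two you leave as a plan.
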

\begin{remark}
    Kulli and Patwari correctly show by construction that $\bondnum{K_n}{1} \leq 2n-5$. They do this by considering a complete graph where $t$, $u$, $v$, and $w$ are four consecutive vertices, and removing edges as shown in Figure \ref{fig:KulliProof}. Suppose that $\bondnum{K_n}{1} < 2n-5$. They claim that then either $\deg(u) \neq 2$, $\deg(v) \neq 2$, or $t$ and $w$ are adjacent, which leads to a contradiction. However, the proof supposes that the method used in Figure \ref{fig:KulliProof} is the only way to increase $\gamma_t$ by removing $2n-5$ edges. However, Figure \ref{fig:KulliContradiction} demonstrates that this is not the case.
\end{remark}

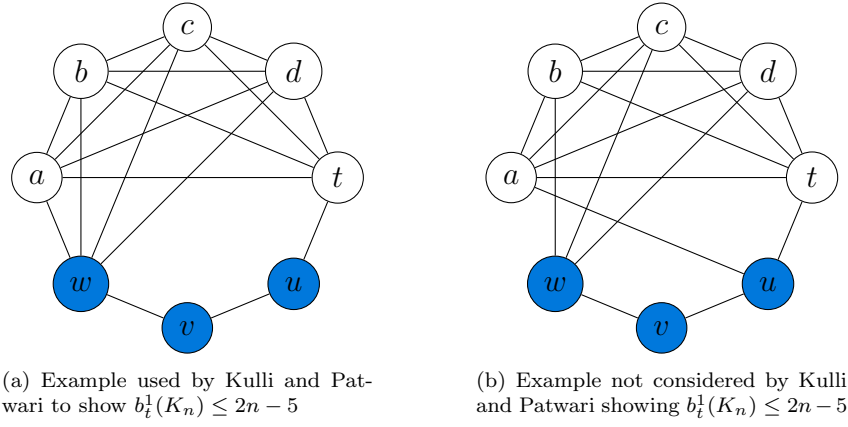
\begin{figure}[h]
\captionsetup[subfigure]{margin=0.1\columnwidth}
    \begin{center}
        \begin{subfigure}[b]{0.45\textwidth}
            \centering
                \begin{tikzpicture}
                    \graph[math nodes, simple, nodes={draw, circle}, clique, n=8, clockwise, radius=2cm]
                        {
                            1/"c", 2/"d", 3/"t", 4/"u"[fill=TDVertColor], 5/"v"[fill=TDVertColor], 6/"w"[fill=TDVertColor], 7/"a", 8/"b";
                            1 -- {2,3,6,7,8};
                            2 -- {1,3,6,7,8};
                            3 -- {1,2,4,7,8};
                            4 -- {3,5};
                            5 -- {4,6};
                            6 -- {8};
                            7 -- {6,8};
                        };
                \end{tikzpicture}
            \subcaption{Example used by Kulli and Patwari to show $\bondnum{K_n}{1}\leq 2n-5$}
            \label{fig:KulliProof}
        \end{subfigure}
        \begin{subfigure}[b]{0.45\textwidth}
            \centering
                \begin{tikzpicture}
                    \graph[math nodes, simple, nodes={draw, circle}, clique, n=8, clockwise, radius=2cm]
                        {
                            1/"c", 2/"d", 3/"t", 4/"u"[fill=TDVertColor], 5/"v"[fill=TDVertColor], 6/"w"[fill=TDVertColor], 7/"a", 8/"b";
                            1 -!- {4,5};
                            2 -!- {4,5};
                            3 -!- {5,6};
                            4 -!- {1,2,6,8};
                            5 -!- {1,2,3,7,8};
                            6 -!- {3,4,7};
                        };
                \end{tikzpicture}
            \subcaption{Example not considered by Kulli and Patwari showing $\bondnum{K_n}{1} \leq 2n-5$ }
            \label{fig:KulliContradiction}
        \end{subfigure}
\caption{Examples with $ n=8$ showing that the removal of a minimum TD-set does not imply existence of two vertices of degree 2 } 
\label{fig:gt(G)}
\end{center}
\end{figure}

We now give a complete proof of Theorem \ref{thm:Kulli}.

\begin{proof}
    By way of contradiction, suppose $\bondnum{K_n}{1} \leq 2n-6$. 
    For the base case, consider $n=5$. Since $\bondnum{K_5}{1}\leq 4$, there exists $S\subset E(K_5)$ such that $|S|=4$ and $\gamma_t(K_5 \setminus S) \geq 3$.
    Then $D(K_5\setminus S)=12$, where $D(G)$ represents the total degree of a graph $G$.
    Note that $K_5$ has $5$ vertices, so there exists $x\in V(K_5\setminus S)$ such that $\deg(x)\geq 3$. 
    If $\deg(x)=4$, then $\{x, a\}$  is a TD-set for any vertex $a$.
    Consider if $\deg(x)=3$, then there exists only one $y \in V(K_5 \setminus S)$ such that $x$ is not adjacent to $y$.
    However, we cannot have any isolated vertices, so there exists $z \in V(K_5 \setminus S)$ such that $y$ is adjacent to $z$.
    Note $x$ must also be adjacent to $z$, so $\{x,z\}$ forms a TD-set. In both cases $\gamma_t(G) = 2$, thus a contradiction when $n=5$.
    Next, let $n$ be the least possible integer such that $\bondnum{K_n}{1} \leq 2n-6$. Then there exists $S\subset E(K_n)$ such that $|S|=2n-6$ and $\gamma_t(K_n \setminus S) \geq 3$.
    Let $G=K_n \setminus S$, so $D(G) = n^2 - 5n +12$. Let $x \in V$ be the vertex of maximum degree in $G$. Therefore, $\deg(x)>n-5$.  We proceed based on the degree of $x$. 
\begin{case}[1]
    If $\deg(x)=n-1$ or $n-2$ then the result follows from a similar argument as the base case.
\end{case}
\begin{case}[2]
    Suppose that $\deg(x)=n-3$. Let $G'=G\setminus \{x\}$, so $D(G')=n^2-7n+18$.
    We know $\bondnum{K_{n-1}}{1} \leq 2n-7$ since $n$ is the least possible integer such that $\bondnum{K_n}{1} \leq 2n-6$.
    So there exists $T \subset E(K_{n-1})$ such that $|T|=2n-7$ and $\gamma_t(K_{n-1}\setminus T) \geq 3$.
    Then $D(K_{n-1}\setminus T) = n^2 - 7n +16.$ Since $D(G') > D(K_{n-1}\setminus T)$, we must have $\gamma_t(G')=2$. Let $\{a,b\}\subset V(G')$ be this TD-set, which implies $a$ is adjacent to $b$.

    If $x$ is adjacent to $a$ or $b$ in $G$, then $\{a,b\}$ is still a TD-set in $G$.

    Suppose $x$ is not adjacent to $a$ or $b$.
    Suppose all $n-3$ other vertices connect to, without loss of generality, vertex $a$. So there exists $c\in V(G)$ such that $c$ is adjacent to both $x$ and $a$. Then $\{a,c\}$ is a TD-set.

    Next suppose $a$ and $b$ are each adjacent to at least one of the remaining $n-3$ vertices, which we will call $a'$ and $b'$ respectively.
    Let $G''=G \setminus \{x,a,b\}$, so $D(G'') \leq n^2 -9n +22$.
    Note that $D(K_{n-3})=n^2-7n+12$, so $G''$ has $n-5$ fewer edges than $K_{n-3}$.
    Let $k$ be the number of vertices in $G''$ that $a$ and $b$ are both adjacent to, then $D(G'') = n^2 -9n +22 - 2k$.
    We can always choose $a'$ and $b'$ to be adjacent, because $D(G'') > D(K_i)+D(K_j)$, for any integers such that $i+j+k=n-3$, as shown below:
       \begin{align*}\label{ijk}
       n^2 -9n +22 - 2k - &D(K_i)-D(K_j) \\& =  n^2 -9n +22 - 2k - i^2 + i - j^2 - j \\
        & = -3n +13 - 2k + i + j + 2ij +2jk +2ik +k^2 \\
        & = k^2 - 5k + 4 -2i - 2j + 2ij +2jk +2ik  \\
        & > 0.
       \end{align*}
    We show one possible case for $G$ and $G''$ in Figure \ref{fig:complete_maxdeg_n-3_3}.
        
        \begin{figure}[h]
            \centering
            \begin{tikzpicture}
                \node[draw, circle] (x) at (0,0) {$x$};
                \node[draw, circle] (a) at (6,2) {$a$};
                \node[draw, circle] (b) at (6,-2) {$b$};
                \draw (1.5,-3) rectangle (4.5,3);
                \node at (3, 3.5) {$G''$};
                \draw (x) -- (1.5,3);
                \draw (x) -- (1.5,-3);
                \node[draw, circle] (a') at (3, 1.5) {$a'$};
                \node[draw, circle] (b') at (3, -1.5) {$b'$};
                \draw (a) -- (b);
                \draw (1.5,0) -- (4.5,0);
                \draw (a') -- (b');
                \draw (a) -- (a');
                \draw (b) -- (b');
            \end{tikzpicture}
            \caption{$G = K_n \setminus S$ with subgraph $G''$}
            \label{fig:complete_maxdeg_n-3_3}
        \end{figure}
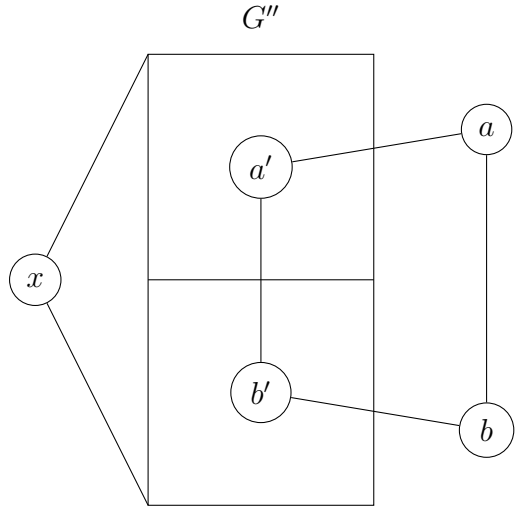

        \begin{figure}[h]
            \centering
            \begin{tikzpicture}[scale=1.25]
                \node[draw, circle] (a') at (6.5,4) {$a'$};
                \node[draw, circle] (b') at (6.5,1) {$b'$};
                \node[draw, circle] (y) at (3.5,1.5) {$y$};
                \draw (0,1.5) rectangle node{$X_0$} (2,3.5);
                \draw (3,0) rectangle node{$X_2$} (5,2);
                \draw (3,3) rectangle node{$X_1$} (5,5);
                \draw[dashed] (a') -- (5,4);
                \draw[dashed] (b') -- (5,4);
                \draw (a') -- (5,1);
                \draw (b') -- (5,1);
                \draw (y) -- (2,2.5);
                \draw (y) -- (4,3);
            \end{tikzpicture}
            \caption{$G''$: For each vertex in $X_1$, only one of the dotted lines is an edge}
            \label{fig:thrm4.2}
        \end{figure}
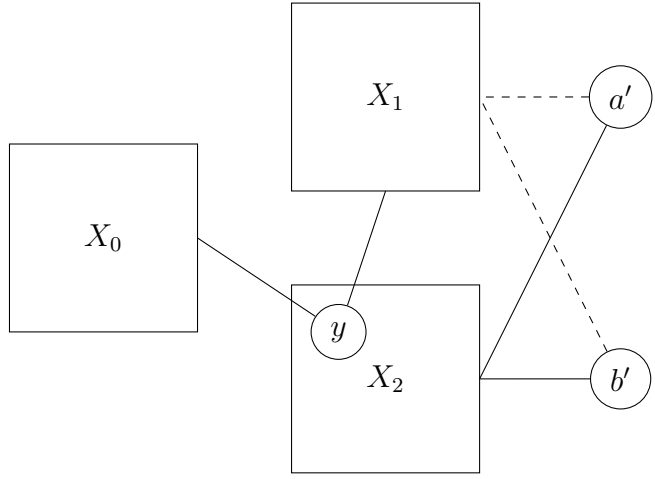
We categorize $V(G'')$ as follows: Let $X_2 \subseteq V(G'')$ be the set of vertices in $G''$ which are adjacent to $a'$ and $b'$, i.e. $X_2 = N(a')\cap N(b')\cap V(G'')$.
Let $X_1 \subseteq V(G'')$ be the set of vertices in $G''$ which are adjacent to exactly one of $a'$ or $b'$.
Let $X_0 \subseteq V(G'')$ be the set of vertices in $G''$ which are not adjacent to either $a'$ or $b'$.
Suppose there exists a $y \in X_2$ which is adjacent to every vertex in $X_0 \cup X_1$ as in Figure \ref{fig:thrm4.2}.
Note that every vertex in $G''$ is adjacent to $a$ or $b$ in $G$.
Assume without loss of generality that $y$ is adjacent to $a$.
Then we can see that $\{y, b'\}$ is a TD-set of $G$.
Therefore every vertex $y \in X_2$ must be missing an edge to a vertex in $X_1\cup X_0$.
Also notice every vertex in $X_1$ is missing an edge to either $a'$ or $b'$ and every vertex in $X_0$ is missing at least the two edges; one to $a'$ and one to $b'$.
Therefore the total number of edges missing in $G''$ is at least $|X_2|+|X_1|+2|X_0|$.
However, $|X_2|+|X_1|+|X_0| = n-5$ and $G''$ has $n-5$ fewer edges than $K_{n-3}$, so $|X_0|=0$.
Then $\{a',b'\}$ is a TD-set of $G$.
\end{case}

\begin{case}[3]
    Suppose there exists no $x\in V(G)$ such that $\deg(x)\geq n-3$.
    Recall that $D(G) = n^2-5n+12=n(n-5)+12$, so there exist at least 12 vertices of degree $n-4$.  Let $x \in V(G)$ such that $deg(x)=n-4$. Let $x$ not be adjacent to $a$, $b$, and $c$, and $\deg(a)=n-4-y_1$, $\deg(b)=n-4-y_2$, and $\deg(c)=n-4-y_3$ for $y_1, y_2,y_3 \geq 0$. We will show that there must exist another vertex adjacent to $a$, $b$, and $c$ that makes a TD-set with $x$. In total, there are $9 + y_1 +y_2 +y_3$ edges removed from $a$, $b$, and $c$. Three of those edges would connect to $x$, so there are at most $6 + y_1 +y_2 +y_3$ other vertices that are not adjacent to at least one of $a$, $b$, or $c$. 
    From $D(G)$ we have that $n \geq 12+ y_1 + y_2 + y_3$, so $\deg(x) = n-4>6 + y_1 +y_2 +y_3 $. Then there must exist a vertex $z$ adjacent to $x$, $a$, $b$, and $c$, so $\{x,z\}$ is a TD-set of $G$. \qedhere
\end{case}
\end{proof}

Next, we show that increasing the total domination number of a complete graph by two only requires removing one additional edge. The set of edges removed, however, is different.
\begin{thm}\label{thm:bt2complete}
    For a complete graph $K_n$ with $n \geq 4$,
\begin{equation}\nonumber
\bondnum{K_n}{2} = 2n-4.
\end{equation}
\end{thm}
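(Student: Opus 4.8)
The plan is to prove the two inequalities $\bondnum{K_n}{2} \le 2n-4$ and $\bondnum{K_n}{2} \ge 2n-4$ separately. For the upper bound I would argue by construction: pick two vertices $a,b$ and delete all $2(n-2)=2n-4$ edges joining $\set{a,b}$ to the remaining $n-2$ vertices, while keeping the edge $ab$. The resulting graph is the disjoint union of a $P_2$ on $\set{a,b}$ and a copy of $K_{n-2}$, so by Lemma \ref{lem:tdnum_components} we get $\tdnum{K_n\setminus S}=\tdnum{P_2}+\tdnum{K_{n-2}}=2+2=4$ whenever $n-2\ge 2$. Hence deleting $2n-4$ edges raises the total domination number by $2$, giving $\bondnum{K_n}{2}\le 2n-4$.

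For the lower bound I would prove the contrapositive: any $S\subseteq E(K_n)$ with $|S|\le 2n-5$ satisfies $\tdnum{K_n\setminus S}\le 3$. Write $G=K_n\setminus S$. Since deleting an edge lowers the total degree by $2$, we have $D(G)=n(n-1)-2|S|\ge n^2-5n+10$, so a maximum-degree vertex $x$ satisfies $\deg(x)\ge \lceil D(G)/n\rceil\ge n-4$. Setting $A=V\setminus(N(x)\cup\set{x})$, this gives $|A|=n-1-\deg(x)\le 3$; that is, the densest vertex fails to dominate at most three vertices. The whole argument then reduces to exhibiting a TD-set of size at most $3$, split according to $|A|\in\set{0,1,2,3}$ (small orders $n$ can be checked directly).

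The cases $|A|\le 1$ are immediate: if $|A|=0$ then $\set{x,y}$ is a TD-set for any neighbor $y$ of $x$, and if $A=\set{a}$ then every neighbor of $a$ lies in $N(x)$, so a $2$-element TD-set again exists. For $|A|\in\set{2,3}$ the natural strategy is to cover $A$ by two vertices $y,z\in N(x)$, since $\set{x,y,z}$ is then a TD-set: $y$ and $z$ are dominated by $x$, $x$ is dominated by $y$, and $N(x)\cup N(y)\cup N(z)=V$. Such a pair fails to exist only in degenerate configurations, namely when some $a\in A$ has no neighbor in $N(x)$ (forcing $N(a)\subseteq A$ and hence a very low degree at $a$), or when $|A|=3$ and the three sets $N(a_i)\cap N(x)$ are pairwise disjoint. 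In the pairwise-disjoint situation a direct count shows each of the $n-4$ vertices of $N(x)$ must miss at least two edges to $A$, which together with the three edges missing at $x$ already accounts for $2(n-4)+3=2n-5$ deletions; equality then rigidifies the graph enough to read off an explicit $3$-vertex TD-set of the form $\set{x,w,a_i}$ with $w\in N(x)\cap N(a_i)$.

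The main obstacle is the remaining degenerate sub-case, in which a vertex $a\in A$ has had nearly all of its incident edges deleted, so that its only surviving neighbors lie inside $A$. Here one must route the dominating set through that neighbor: include it together with $x$ and a carefully chosen $w\in N(x)$ adjacent to it, and then verify that the last uncovered vertex of $A$ is reached. The crux is a bookkeeping argument showing that if this construction failed for every admissible choice, then the additional missing edges (from the remaining vertex of $A$ to $x$, to $a$'s neighbor, and to $N(x)\cap N(a)$) would push $|S|$ beyond $2n-5$, contradicting the hypothesis. I expect this edge-counting in the low-degree regime to be the most technical step, since it is where the bound is genuinely tight; by contrast, everything else is either a clean construction or a one-line degree estimate.
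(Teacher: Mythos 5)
Your upper bound and the skeleton of your lower bound coincide with the paper's proof: isolate a $P_2$ to get $\bondnum{K_n}{2}\le 2n-4$, then take a maximum-degree vertex $x$ with $\deg(x)\ge n-4$ and split on the set $A$ of non-neighbors of $x$. Your cases $|A|\le 1$, the non-degenerate covering case, and the pairwise-disjoint rigidity count are all sound. The genuine gap is in the last degenerate sub-case, exactly the one you defer to a ``bookkeeping argument'': you claim that if $\set{x,w,a'}$ fails for every admissible $w\in N(x)\cap N(a')$ (where $a'$ is the surviving neighbor of the low-degree vertex $a\in A$), then the missing edges push $|S|$ beyond $2n-5$. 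That is false when the set of admissible choices is empty. Take $|A|=3$, $A=\set{a,a',c}$, $N(a)=\set{a'}$, and $N(x)\cap N(a')=\emptyset$: the missing edges you can certify are the $n-3$ at $a$, the two edges $xa'$ and $xc$, and the $n-4$ edges from $N(x)$ to $a'$, totalling exactly $2n-5$ --- tightness, not a contradiction. (When admissible $w$ exist but all fail, your count does give $(n-3)+2+1+(n-4)=2n-4>2n-5$, so that half of the claim is fine.) To close the empty sub-case you must exploit the forced structure: equality means every uncounted edge is present, so $A$ is a triangle, $N(x)\cup\set{x}$ is a clique, and every vertex of $N(x)$ is adjacent to $c$; then $\set{x,w,c}$ is a TD-set for any $w\in N(x)$ (alternatively, $\deg(w)\ge n-3>\deg(x)$ contradicts the maximality of $\deg(x)$). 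So your plan is completable, but as written the crux step rests on an incorrect counting claim.

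It is worth seeing how the paper avoids this thicket entirely: it never requires the two covering vertices to lie in $N(x)$. Writing $A=\set{a,b,c}$ and $V'=N(x)$, the paper splits on whether some $z\in V'$ is adjacent to three, exactly two, or at most one vertex of $A$. In the ``exactly two'' case (say $z$ sees $a$ and $b$) the TD-set is $\set{x,z,w}$ where $w$ is \emph{any} neighbor of $c$; crucially $w$ is allowed to be $a$ or $b$ itself, in which case $z$ dominates it, and this freedom is precisely what absorbs your degenerate case with no edge counting at all. In the ``at most one'' case the paper concludes with the one-line total-degree bound
\begin{equation*}
D(K_n\setminus S)\le (n-4)+\bigl(6+(n-4)\bigr)+(n-4)^2=n^2-6n+14,
\end{equation*}
which contradicts $D(K_n\setminus S)=n^2-5n+10$ for $n\ge 5$, so no tightness or rigidity analysis is ever needed.
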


\begin{proof}
    We can disconnect a $P_2$ from $K_n$ by removing $2n-4$ edges, so $\bondnum{K_n}{2} \leq 2n-4$.
    Suppose for sake of contradiction that there exists $n$ such that $\bondnum{K_n}{2} \leq 2n -5$, and we will show that after removing $2n-5$ edges we can always find a TD-set of cardinality at most three.
    There must exist a set $S$ such that $\gamma_t(K_n\setminus S) \geq 4$, and $|S|=2n-5$.
    Then $D(K_n\setminus S) = n^2 - 5n +10$, so by letting $x$ be the vertex of maximal degree, we have $\deg(x) \geq n-4$.
    If $\deg(x)=n-1$ or $n-2$, we can find a TD-set using the methods from Theorem \ref{thm:Kulli}. 

    If $\deg(x)=n-3$, then $x$ is not adjacent to two vertices $a$ and $b$.
    If $a$ and $b$ are adjacent, then either $a$ or $b$ must be adjacent to another vertex $y$ since a complete graph will have no disconnected components after removing $2n-5$ edges without isolating a vertex.
    Without loss of generality, suppose $y$ and $a$ are adjacent, then $\{x,y,a\}$ is a TD-set of $K_n\setminus S$.
    Similarly, if $a$ and $b$ are not adjacent, they must each be adjacent to vertices $y$ and $z$, and then $\{x,y,z\}$ is a TD-set of $K_n\setminus S$.  

    If $\deg(x)=n-4$, then $x$ is not adjacent to three vertices $a$, $b$, and $c$. Let $V' = V \setminus \{a, b, c, x\}$. 
    If there is a vertex $z \in V'$ which is adjacent to $a, b$, and $c$ then $\{x, z\}$ is a TD-set. If there is a vertex $z \in V'$ which is adjacent to two vertices in $\{a, b, c\}$, say $a$ and $b$, then $\{x, z, w\}$ is a TD-set for any vertex $w$ which is adjacent to $c$.  
    If every vertex in $V'$ is adjacent to at most one vertex in $\{a, b, c\}$ then 
    \begin{align*}
        D(K_n\setminus S) &= \deg(x) + \deg(a, b, c) + \deg(V')\\
        & \leq n-4 + 6 + n-4 + (n-4)(n-4)\\
        & = n^2 - 6n + 14.        
    \end{align*}
    which is a contradiction because $D(K_n \setminus S) = n^2-5n+10$.
\end{proof}

Using $\bondnum{K_n}{1}$ and $\bondnum{K_n}{2}$ as base cases for induction, we find a general result for $\bondnum{K_n}{k}$.
In our proof we apply the following helpful result from \cite{sanchis2004relating} for when the graph stays connected.

\begin{thm}[\cite{sanchis2004relating}]\label{edgesbound}
    Let $G$ be a connected graph with $n$ vertices and $l$ edges, and $\gamma_t(G) \geq 5$. Then $l \leq \binom{n- \gamma_t(G) +1}{2} + \left\lfloor \frac{\gamma_t(G)}{2} \right\rfloor$. 
\end{thm}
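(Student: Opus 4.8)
The plan is to read this as a purely extremal statement: among all connected graphs on $n$ vertices (no isolated vertices) with $\gamma_t(G) = \gamma_t \geq 5$, the number of edges $l$ cannot exceed $\binom{n-\gamma_t+1}{2} + \floor{\gamma_t/2}$. The shape of the bound already suggests the intended extremal picture, namely a clique on roughly $n-\gamma_t+1$ vertices together with a sparse ``hard to totally dominate'' gadget on the remaining $\gamma_t-1$ vertices; the term $\floor{\gamma_t/2}$ is morally the least number of edges one can spend on those $\gamma_t-1$ extra vertices so that none of them is isolated (a near-matching forces $\ceil{(\gamma_t-1)/2}=\floor{\gamma_t/2}$ incident edges). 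Before estimating anything I would fix a minimum TD-set $S$ with $|S|=\gamma_t$, set $\bar S = V\setminus S$ with $|\bar S| = n-\gamma_t$, and decompose
\begin{equation*}
l = e(\bar S) + e(S,\bar S) + e(S),
\end{equation*}
noting that $G[S]$ has minimum degree at least $1$ because $S$ totally dominates $S$ itself. Identifying the extremal configuration first is valuable because it tells me which of these three terms must be large (the clique term $\binom{n-\gamma_t+1}{2}$) and which must be held down, and therefore which inequalities I am allowed to lose in.

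My primary approach would be induction on $n$ with $\gamma_t$ fixed, treating small $\gamma_t$ (especially $\gamma_t\in\{5,6\}$) and small $n$ as base cases. The governing bookkeeping fact is that reducing $n$ by one while keeping $\gamma_t$ fixed changes the target by exactly
\begin{equation*}
\binom{n-\gamma_t+1}{2} - \binom{n-\gamma_t}{2} = n-\gamma_t .
\end{equation*}
So I would hunt for a vertex $v$ of degree at least $n-\gamma_t$ whose deletion leaves a graph that is still connected, has no isolated vertex, and satisfies $\gamma_t(G-v)\geq \gamma_t$; intuitively $v$ should be a high-degree ``clique'' vertex whose removal does not make total domination any easier. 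Applying the induction hypothesis to $G-v$ and adding back the $\deg(v)\geq n-\gamma_t$ deleted edges then closes the step. This is also where the hypothesis $\gamma_t\geq 5$ earns its keep: for small $\gamma_t$ very dense graphs (e.g.\ $K_n$ with $\gamma_t=2$) blow through the bound, so the argument cannot possibly survive without a lower cutoff on $\gamma_t$.

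As a more self-contained alternative I would bound the three terms of the decomposition directly. Here $e(\bar S)\leq \binom{n-\gamma_t}{2}$ is immediate, and the entire difficulty concentrates in showing that the ``interface plus interior-of-$S$'' contribution $e(S,\bar S)+e(S)$ cannot exceed $(n-\gamma_t)+\floor{\gamma_t/2}$, which would recombine with $\binom{n-\gamma_t}{2}$ to give precisely $\binom{n-\gamma_t+1}{2}+\floor{\gamma_t/2}$. The engine for capping these crossing edges is the minimality of $S$: if a single vertex (in $S$ or in $\bar S$) were adjacent to too large a share of the graph, I could swap it in for several members of $S$ and exhibit a TD-set of size strictly less than $\gamma_t$, contradicting minimality. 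Phrasing this as a private-neighbour account — every $s\in S$ must keep a witness (an external private neighbour, or a vertex whose only $S$-neighbour is $s$) justifying its presence — is what limits how densely $S$ can attach to the rest.

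The step I expect to be the main obstacle is exactly this interface estimate together with the parity hidden in $\floor{\gamma_t/2}$. Reconciling the minimum-degree requirement on the $\gamma_t-1$ ``hard'' vertices (which on its own wants a clean matching) with the demand that the whole graph stay connected is delicate, and the floor is precisely the residue of that odd/even tension; pinning it down requires a careful case split on how those vertices attach to $\bar S$ and to one another. By contrast, bounding $e(\bar S)$ and carrying the inductive bookkeeping are routine — the real work is ruling out every slightly denser configuration that nonetheless keeps $\gamma_t(G)\geq 5$.
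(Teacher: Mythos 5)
First, a point of comparison: the paper never proves this statement at all — it is imported verbatim from Sanchis \cite{sanchis2004relating} and used as a black box — so your attempt must stand on its own, and as written it does not: both of your proposed routes contain concrete errors, not just unfilled details. In the inductive route the bookkeeping is backwards. Writing $f(n)=\binom{n-\gamma_t+1}{2}+\floor{\gamma_t/2}$, the increment is $f(n)-f(n-1)=n-\gamma_t$, and the step $l(G)=l(G-v)+\deg(v)\leq f(n-1)+\deg(v)$ closes only when $\deg(v)\leq n-\gamma_t$; you require $\deg(v)\geq n-\gamma_t$ and claim that adding those edges back finishes the step, which yields an inequality in the wrong direction whenever $\deg(v)>n-\gamma_t$. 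Moreover, the existence of any vertex whose deletion preserves connectivity, isolate-freeness, \emph{and} satisfies $\gamma_t(G-v)\geq\gamma_t(G)$ is exactly the hard content of the theorem and is asserted rather than proved — vertex deletion can decrease the total domination number (for example $\gamma_t(P_6)=4$ but $\gamma_t(P_5)=3$), so such a vertex need not be easy to find.

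Your fallback decomposition rests on a false intermediate claim. Consider the clique on $n-6$ vertices with three pendant paths $c_i x_i y_i$ ($i=1,2,3$) attached at distinct clique vertices $c_1,c_2,c_3$: here $\gamma_t=6\geq 5$ and $S=\{x_1,c_1,x_2,c_2,x_3,c_3\}$ is a minimum TD-set, yet $e(S)+e(S,\bar S)=3n-18$, which for $n\geq 8$ far exceeds your target $(n-\gamma_t)+\floor{\gamma_t/2}=n-3$. The graph still satisfies Sanchis's bound, but only because $e(\bar S)=\binom{n-9}{2}$ falls far short of $\binom{n-\gamma_t}{2}$; so the bound cannot be established by charging $\binom{n-\gamma_t}{2}$ to $\bar S$ and the remainder to the interface, relative to an arbitrary minimum TD-set. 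Relatedly, the extremal picture guiding your whole plan is impossible: a connected graph cannot consist of a clique on $n-\gamma_t+1$ vertices together with $\gamma_t-1$ further vertices carrying only $\floor{\gamma_t/2}$ edges, because attaching those $\gamma_t-1$ vertices to the rest of a connected graph requires at least $\gamma_t-1>\floor{\gamma_t/2}$ edges. The true extremal graphs must interleave the dense part with the domination-forcing structure, which is why this theorem requires genuine work (and the hypothesis $\gamma_t\geq 5$), and why neither of your two routes, as sketched, can be completed by routine effort.
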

We first prove a lemma that will allow us to use Theorem \ref{edgesbound} in the proof for $\bondnum{K_n}{k}$.
\begin{lem}\label{nonincreasing}
    Let $G$ be a graph with $n$ vertices. Then $f(\gamma_t(G))=\binom{n- \gamma_t(G) +1}{2} + \left\lfloor \frac{\gamma_t(G)}{2} \right\rfloor$ is a nonincreasing function for $2 \leq \gamma_t(G) \leq n$. 
\end{lem}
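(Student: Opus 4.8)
The plan is to fix $n$ and treat $t=\tdnum{G}$ as an integer variable ranging over $\{2,3,\dots,n\}$, reducing the claim to showing that the forward difference $f(t+1)-f(t)$ is nonpositive for every $t$ with $2\le t\le n-1$. Since $f$ is defined only at integer values of $\tdnum{G}$, proving $f(t+1)\le f(t)$ at each such step is exactly what "nonincreasing" means here, and it lets me avoid any calculus. First I would split $f$ into its two summands, the binomial term $\binom{n-t+1}{2}$ and the floor term $\floor{t/2}$, and analyze the change in each separately.

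For the binomial term, the step $t\mapsto t+1$ sends the argument $n-t+1$ to $n-t$, so the relevant quantity is $\binom{n-t}{2}-\binom{n-t+1}{2}$. Writing $m=n-t$ and using the elementary identity $\binom{m+1}{2}-\binom{m}{2}=m$, this difference equals $-m=-(n-t)$. Because $t\le n-1$ we have $n-t\ge 1$, so the binomial part strictly decreases by at least $1$ at every step. For the floor term, I would use the standard fact that $\floor{(t+1)/2}-\floor{t/2}\in\{0,1\}$, so this summand increases by at most $1$.

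Combining the two estimates gives
\begin{equation*}
f(t+1)-f(t)=\big(\tbinom{n-t}{2}-\tbinom{n-t+1}{2}\big)+\big(\floor{(t+1)/2}-\floor{t/2}\big)\le -(n-t)+1\le 0,
\end{equation*}
where the last inequality holds precisely because $n-t\ge 1$ on the range $2\le t\le n-1$. This establishes $f(t+1)\le f(t)$ throughout, which is the desired conclusion.

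I do not expect any real obstacle here, as the argument is a short discrete-difference computation. The only point requiring a moment of care is the floor term: one must note that its increase is capped at $1$ so that it can never outweigh the $-(n-t)$ coming from the binomial coefficient. It is worth remarking that equality $f(t+1)=f(t)$ can occur (for instance at $t=n-1$ when the floor jumps by $1$), which is why the statement asserts only that $f$ is nonincreasing rather than strictly decreasing.
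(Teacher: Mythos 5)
Your proof is correct and uses essentially the same argument as the paper: both reduce the claim to comparing consecutive integer values, use the identity $\binom{n-t+1}{2}=\binom{n-t}{2}+(n-t)$ with $n-t\geq 1$, and bound the floor term's increase by $1$. The only difference is presentational --- you phrase it as a forward-difference computation, while the paper writes it as a chain of inequalities from $f(x)$ down to $f(x+1)$.
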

\begin{proof}
    We can compare the values of the function for $\gamma_t(G)=x$ and $\gamma_t(G)=x+1$ for $2 \leq x \leq n-1$. We have 
    \begin{eqnarray*}
    f(x) & = & \binom{n-x+1}{2}+\left\lfloor\frac{x}{2}\right\rfloor \\
    & = & \frac{(n-x)(n-x-1)}{2} + \frac{2(n-x)}{2} + \left\lfloor\frac{x}{2}\right\rfloor \\
    & \geq & \frac{(n-x)(n-x-1)}{2} + 1 + \left\lfloor\frac{x}{2}\right\rfloor \\
    & \geq & \binom{n-x}{2}+\left\lfloor\frac{x+1}{2}\right\rfloor \\
    & = & f(x+1). 
    \end{eqnarray*} 
\end{proof}

\begin{thm}\label{thm:complete_kbond}
    Let $K_n$ be a complete graph with $n \geq 5$, and $k \leq n-2$ for even $k$ and $k \leq n-3$ for odd $k$. Then,
    \begin{equation*}
        \bondnum{K_n}{k} =
        \begin{cases}
        nk - \frac{k^2+2k}{2} & k \equiv 0 \pmod{2}\\
        nk + n - \frac{k^2 +4k+5}{2}  & k \equiv 1 \pmod{2}.
        \end{cases}
    \end{equation*}
\end{thm}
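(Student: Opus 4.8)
The plan is to induct on $k$ in steps of two, taking the known values $\bondnum{K_n}{1}=2n-5$ (Theorem \ref{thm:Kulli}) and $\bondnum{K_n}{2}=2n-4$ (Theorem \ref{thm:bt2complete}) as base cases and proving the recurrence
\begin{equation*}
\bondnum{K_n}{k}=2(n-2)+\bondnum{K_{n-2}}{k-2}.
\end{equation*}
Since this recurrence preserves the parity of $k$, the even and odd branches never interact, and a direct substitution shows that the claimed closed form satisfies it; hence it suffices to establish the recurrence together with the two base cases. Throughout I will argue in terms of edges retained: writing $R=\binom{n}{2}-\bondnum{K_n}{k}$ for the target number of surviving edges (so $R=\binom{n-k}{2}+\tfrac{k}{2}$ for even $k$ and $R=\binom{n-k-1}{2}+\tfrac{k+3}{2}$ for odd $k$), the goal becomes: the maximum number of edges of a spanning subgraph $G$ of $K_n$ with no isolated vertices and $\tdnum{G}\ge k+2$ is exactly $R$.

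For the upper bound I would give an explicit construction by peeling. Deleting the $2(m-2)$ edges that join two chosen vertices to the other $m-2$ vertices of a clique $K_m$, while keeping the single edge between them, produces $K_{m-2}\sqcup K_2$ and, by Lemma \ref{lem:tdnum_components}, raises the total domination number by exactly $2$. Iterating this $\floor{k/2}$ times realizes the recurrence: for even $k$ it terminates in $K_{n-k}\sqcup \frac{k}{2}K_2$, and for odd $k$ it terminates after $\frac{k-1}{2}$ peels in a copy of $K_{n-k+1}$, on which one applies the extremal $k=1$ configuration (the $\tdnum{}=3$ graph witnessing $\bondnum{K_{n-k+1}}{1}$). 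Summing the deletion costs and using $\tdnum{K_n}=2$ returns the closed form. This is where the feasibility hypotheses $k\le n-2$ (even) and $k\le n-3$ (odd) enter, since the final $\tdnum{}=3$ gadget only exists once enough vertices remain; the odd extreme deserves separate care.

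The lower bound is the substantive half, and I would split on whether $G=K_n\setminus S$, for $S$ minimum with $\tdnum{G}\ge k+2$, is connected. If $G$ is connected then $k\ge 3$ forces $\tdnum{G}\ge 5$, so Theorem \ref{edgesbound} gives $|E(G)|\le f(\tdnum{G})$, and Lemma \ref{nonincreasing} with $\tdnum{G}\ge k+2$ yields $|E(G)|\le f(k+2)=\binom{n-k-1}{2}+\floor{\frac{k+2}{2}}$. A short computation shows $f(k+2)\le R$ in both parities (with slack $n-k-2$ when $k$ is even and slack $1$ when $k$ is odd), so connected graphs already satisfy the bound. If $G$ is disconnected, let $H$ be a component with $m$ vertices and $\tdnum{H}=g\ge 2$. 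Using $\binom{n}{2}=\binom{m}{2}+\binom{n-m}{2}+m(n-m)$ and $|E(H)|\le\binom{m}{2}$, and then $\tdnum{G-H}\ge k+2-g$ with the inductive hypothesis (reading the bondage number as $0$ when $k-g\le 0$), I obtain
\begin{equation*}
|S|\ \ge\ m(n-m)+\paren{\binom{n-m}{2}-|E(G-H)|}\ \ge\ m(n-m)+\bondnum{K_{n-m}}{\,k-g\,}.
\end{equation*}
Taking $H$ to be a smallest component and verifying that the right-hand side is minimized by a $K_2$ component ($m=g=2$), which reproduces $2(n-2)+\bondnum{K_{n-2}}{k-2}$, then closes the induction.

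The main obstacle is exactly this last minimization in the disconnected case: one must rule out that some larger or denser component retains more edges than peeling off a $K_2$. This needs two ingredients. First, sharp edge bounds for components with $\tdnum{}\in\set{3,4}$, where Theorem \ref{edgesbound} does not apply and a direct argument in the spirit of Theorem \ref{thm:Kulli} is required. Second, a convexity estimate showing that $m(n-m)+\bondnum{K_{n-m}}{k-g}$ is nondecreasing in $m$ over the admissible range, for which the essential input is that a component with $\tdnum{H}=g$ must have at least $\tfrac{3g}{2}$ vertices, so increasing $g$ costs proportionally many vertices and hence clique edges. The remaining delicate points are the parity and boundary corners—particularly odd $k$ close to $n-3$, where the $\tdnum{}=3$ gadget becomes tight—which I would isolate and dispatch separately from the generic inductive step.
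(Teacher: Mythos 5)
Your upper-bound construction (iterated peeling of a $P_2$, finishing with the $\bondnum{K_{n-k+1}}{1}$ gadget when $k$ is odd) and your connected-case lower bound (Theorem \ref{edgesbound} applied through the monotonicity in Lemma \ref{nonincreasing}, with $k\le 2$ handled by Theorems \ref{thm:Kulli} and \ref{thm:bt2complete}) are exactly the paper's argument, down to the same slack computations. The problem is the disconnected case, which you correctly identify as the substantive half but then set up in a way that cannot be repaired as stated.

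When you split off a component $H$ with $m$ vertices and $\tdnum{H}=g$, you bound $|E(H)|\le\binom{m}{2}$, i.e.\ you discard every edge missing \emph{inside} $H$, keeping only the $m(n-m)$ crossing edges and the missing edges of $G-H$. The resulting inequality $|S|\ge m(n-m)+\bondnum{K_{n-m}}{k-g}$ is true but too weak, and the minimization you then assert --- that this right-hand side is minimized at $m=g=2$ --- is false. Take $n=10$, $k=6$, so $\bondnum{K_{10}}{6}=36$. The point $(m,g)=(6,4)$ is admissible (it satisfies your constraint $m\ge \tfrac{3g}{2}$, and is realized by $G=C_6+K_2+K_2$, where $\tdnum{C_6}=4$ and the two $K_2$'s give $\tdnum{G}=8\ge k+2$ by Lemma \ref{lem:tdnum_components}); there your right-hand side is $6\cdot 4+\bondnum{K_4}{2}=24+4=28$, while at $(2,2)$ it is $2\cdot 8+\bondnum{K_8}{4}=16+20=36$. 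Since the quantity you want to prove is $\ge 36$ actually dips to $28$ at an admissible point, no convexity or monotonicity argument in $m$ can close the induction; the inequality you would need is simply not valid for the weakened right-hand side. The missing quantity is precisely the dropped term: a component with $\tdnum{H}=g\ge 3$ is not complete, so by the same strong induction $\binom{m}{2}-|E(H)|\ge\bondnum{K_m}{g-2}$, and this must be retained. That is what the paper does: with $i=g-2$ it proves $m(n-m)+\bondnum{K_m}{i}+\bondnum{K_{n-m}}{k-i-2}\ge\bondnum{K_n}{k}$ for $0\le i\le m-2$ (its Equation \ref{eq:components}), which after substituting the closed form factors as $(m-i-2)(n-m-k+i)\ge 0$ in the even/even case, with small additive corrections in the other parity cases. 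In the example above this three-term bound gives $24+8+4=36$, with equality --- showing that the term you threw away is exactly the deficit in your estimate.
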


\begin{proof}
    We can apply the methods described in Theorems \ref{thm:Kulli} and \ref{thm:bt2complete} iteratively to find a bound for $\bondnum{K_n}{k}$.
    The method from Theorem \ref{thm:bt2complete} results in a $K_{n-2}$ and a $P_2$, and can be repeated $i$ times to obtain a $K_{n-2i}$ and $i$ copies of $P_2$.
    This gives a bound for even $k$. For odd $k$ we can repeat the same process $\frac{k-1}{2}$ times, and then use the method from Theorem \ref{thm:Kulli} to increase $\gamma_t$ by one.
    We now show that these bounds achieve equality, meaning it is impossible to increase the total domination number more efficiently than by using these methods.

First, we consider when the graph stays connected after removing edges.
    We only need to consider when $\gamma_t(G) \geq 5$ because we proved the cases $\bondnum{K_n}{1}$ and $\bondnum{K_n}{2}$ separately, which lets us apply Theorem \ref{edgesbound}.
    For even $k$, suppose $\bondnum{K_n}{k}<nk-\frac{k^2+2k}{2}$.
    Then the number of edges remaining after removing $nk-\frac{k^2+2k}{2}-1$ is $l=\frac{n^2-n}{2}-nk+\frac{k^2+2k}{2}+1=\frac{n^2-2nk-n+k^2+2k+2}{2}$. Note $\bondnum{K_n}{k}$ gives us $\gamma_t \geq k+2$ for the resulting graph, and from Lemma \ref{nonincreasing} the number of edges will be maximized for $\gamma_t = k+2$. Then the bound from Theorem \ref{edgesbound} gives us
\begin{eqnarray*}
l & \leq & \binom{n- k-1}{2} + \left\lfloor \frac{k+2}{2} \right\rfloor \\
& = & \frac{(n-k-1)(n-k-2)}{2} + \frac{k+2}{2} \\
& = & \frac{n^2-2nk-3n+k^2+4k+4}{2}.
\end{eqnarray*}
Taking the difference between the number of edges and the bound gives  
\begin{align*}
\frac{n^2-2nk-n+k^2+2k+2}{2} - &\frac{n^2-2nk-3n+k^2+4k+4}{2}\\
& =  \frac{2n-2k-2}{2} \\
& =  n-k-1.
\end{align*}
Since this is always greater than zero, the number of edges exceeds the bound, so $\gamma_t(G) < k+2$.
    We can take the same approach for odd $k$.
    The number of edges remaining after removing $nk+n-\frac{k^2+4k+5}{2}-1$ is $l=\frac{n^2-n}{2}-nk-n+\frac{k^2+4k+5}{2}+1=\frac{n^2-2nk-3n+k^2+4k+7}{2}$.
    The bound from Theorem \ref{edgesbound} gives us
\begin{eqnarray*}
l & \leq & \frac{n^2-2nk-3n+k^2+4k+3}{2}.
\end{eqnarray*}
Taking the difference between the number of edges and the bound gives  
\begin{eqnarray*}
\frac{n^2-2nk-3n+k^2+4k+7}{2} - \frac{n^2-2nk-3n+k^2+4k+3}{2} =  2.
\end{eqnarray*}
    Since this is greater than zero, the number of edges exceeds the bound, so $\gamma_t(G) < k+2$.
    This proves the result in the case when the graph stays connected.

    Next, we look at when the graph is split into multiple components by removing edges.
    Suppose for induction on $k$ that for all $n$, and even $k \leq n-2$, we have $\bondnum{K_n}{k} = 2 \sum_{i=1}^\frac{k}{2} (n-2i)$.
    Now consider $\bondnum{K_n}{k+2}$.
    If $k+2 \leq n -2$, and the process for increasing $\gamma_t$ involves removing a $P_2$, then the result follows by induction using Theorems \ref{thm:Kulli} and \ref{thm:bt2complete}.
    Note that $\bondnum{K_n}{k+2}$ is not well defined for $k+2 > n -2$.
    We can use the same approach using induction for odd $k$ in the case of removing a $P_2$.

    Finally, consider removing a component of the graph of order $m>2$.
    We only need to consider removing a single component, since this would imply that then any number of components follows by induction.
    Therefore, we start by disconnecting a $K_m$ component.
    Fix $k$, and suppose the theorem holds for all $x \leq k$ and $n \geq x+2$.
    We want to show that the theorem holds for $k+2$ if $k+2 \leq n-2$. 
    Since we are using strong induction, we can apply the theorem to the $K_m$ and $K_{n-m}$ components of the graph.
    We want to show that
\begin{equation}\label{eq:components}
    m(n-m) + \bondnum{K_m}{i} + \bondnum{K_{n-m}}{k-i-2} \geq \bondnum{K_n}{k},
\end{equation}
    where $0 \leq i \leq m-2$. To prove this, we consider several cases depending on the parity of $k$ and $i$. 

    First, let $k$ and $i$ both be even.
    Note that 
\begin{align*}
&m(n-m) + \bondnum{K_m}{i} + \bondnum{K_{n-m}}{k-i-2}-\bondnum{K_n}{k}\\
&= mn-m^2 + mi - \frac{i^2}{2}-i + (n-m)(k-i-2) \\
& \qquad - \frac{(k-i-2)^2}{2} - k + i +2 - nk - \frac{k^2}{2}-k \\
&= mn-m^2 + mi - \frac{i^2}{2}-i + (n-m)(k-i-2) \\
& \qquad - \frac{(k-i-2)^2}{2} - k + i +2 -  nk - \frac{k^2}{2}-k \\
&=mn-m^2 + mi - i^2 -2n -ni - mk + 2m +mi -2+ki+2k-2i+2 \\
&=(m-i-2)(n-m-k+i).
\end{align*}
    Note that $m-i-2 \geq 0$ and $n - m -k+i \geq 0$ from the bounds for increasing the total domination number for a complete graph. Therefore Equation \ref{eq:components} holds for even $k$ and $i$.

    We can follow the same line of reasoning to show that Equation \ref{eq:components} holds when $k$ is even and $i$ is odd, and when $k$ and $i$ are odd.
    When $k$ is even and $i$ is odd, we need to show 
\begin{equation*}
(m-i-3)(n-m-k+i) +m-i+2 \geq 0,     
\end{equation*}
which is true since when $i$ is odd we have $m-i-3 \geq 0$. When $k$ and $i$ are odd we need to show 
\begin{equation*}
(m-i-3)(n-m-k+i) +k \geq  0,  
\end{equation*}
    which holds for the same reasons as above. This proves the result for when the graph becomes disconnected, which completes the proof. \qedhere
\end{proof} 
Complete graphs require removing more edges than any other graph to increase the total domination number, which leads to the following corollary. 

\begin{cor}
    Let $G$ be a graph on $n \geq 5$ vertices.
    For any value $k \leq n-2 + \frac{(-1)^n-1}{2}$, if $\bondnum{G}{k}$ exists then
    \begin{equation*}
        \bondnum{G}{k} \leq \bondnum{K_n}{k}.
    \end{equation*}
\end{cor}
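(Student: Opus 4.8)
The plan is to recast the inequality as an edge count and then build the required deletion by hand. The proof of Theorem~\ref{thm:complete_kbond} shows that an optimal deletion for $K_n$ leaves the configuration $K_{n-k}\cup \tfrac{k}{2}K_2$ (for even $k$, with the analogous configuration for odd $k$), and since every $n$-vertex graph is a spanning subgraph of $K_n$ this says
\begin{equation*}
\bondnum{K_n}{k}=\binom{n}{2}-M_n(k),
\end{equation*}
where $M_n(k)$ denotes the maximum number of edges in an $n$-vertex graph with no isolated vertices and total domination number at least $k+2$. For an arbitrary $G$ with $\tdnum{G}=t_0\ge 2$, write $\bondnum{G}{k}=|E(G)|-f$, where $f$ is the largest number of edges in a spanning subgraph $H\subseteq G$ with no isolated vertices and $\tdnum{H}\ge t_0+k$. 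Hence the corollary is equivalent to $f\ge M_n(k)-r$, where $r=\binom{n}{2}-|E(G)|$ counts the non-edges of $G$, and it need only be proved when $\bondnum{G}{k}$ exists.

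The heart of the argument is a construction. I would look for a set $B$ of $k$ vertices such that (a) $G[B]$ contains a perfect matching of $B$, (b) $\tdnum{G[V\setminus B]}\ge t_0$, and (c) $G[V\setminus B]$ has no isolated vertices. Given such a $B$, delete every edge of $G$ meeting $B$ except the $k/2$ chosen matching edges; the result is $H=G[V\setminus B]\sqcup \tfrac{k}{2}K_2$, which has no isolated vertices and, by Lemma~\ref{lem:tdnum_components}, satisfies $\tdnum{H}=\tdnum{G[V\setminus B]}+k\ge t_0+k$. The number of deleted edges equals the number of edges of $G$ meeting $B$, minus $k/2$; because $G\subseteq K_n$, the edges of $G$ meeting $B$ number at most $\binom{n}{2}-\binom{n-k}{2}$, so the deletion uses at most $\binom{n}{2}-\binom{n-k}{2}-\tfrac{k}{2}=\bondnum{K_n}{k}$ edges. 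The edge bookkeeping is therefore automatic: once a suitable $B$ exists, the inequality follows with no further computation. For odd $k$ I would take $B$ of size $k-1$ and supply the last unit of increase on the core with a single total-bondage move as in the proof of Theorem~\ref{thm:Kulli}, the incident-edge count again matching the odd-$k$ formula.

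The work thus concentrates on producing $B$, and this is where I expect the real difficulty. When $t_0=2$ condition (b) is free, since any core without isolated vertices already has total domination number at least $2$, and the problem collapses to finding a size-$\tfrac{k}{2}$ matching whose removal isolates no vertex; the hypothesis that $\bondnum{G}{k}$ exists supplies a spanning subgraph with total domination number at least $2+k$, and I would extract the matching from its component structure. The genuinely hard case is $t_0\ge 3$, where (b) demands that deleting the $k$ matched vertices not drop the core's total domination number below $t_0$; since removing vertices can in principle decrease $\tdnum{}$, the matching must be chosen in a ``peripheral'' part of $G$. I would again use a witnessing subgraph $H'\subseteq G$ with $\tdnum{H'}\ge t_0+k$ to locate $k/2$ disjoint edges that can be split off while the remaining core still needs $t_0$ dominators, arguing that achievability of the target forces enough such structure to be present. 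Verifying that a valid $B$ always exists over the stated range of $k$, together with the odd-$k$ and odd-$n$ boundary checks, is the crux; the fact that $\bondnum{G}{k}$ need only be bounded when it exists is essential, as graphs such as the join $K_2\vee \overline{K_{n-2}}$ show that the prescribed increase can simply be unattainable, in which case there is nothing to prove.
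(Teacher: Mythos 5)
Your reduction and bookkeeping are fine as far as they go: granting a set $B$ with properties (a)--(c), keeping a perfect matching of $B$ and deleting all other edges of $G$ meeting $B$ removes at most $\binom{n}{2}-\binom{n-k}{2}-\frac{k}{2}=\bondnum{K_n}{k}$ edges and raises the total domination number by $k$, exactly as you compute. The genuine gap is that the existence of $B$ is not a deferrable detail --- it is the entire content of the statement --- and, as your construction is formulated, it is actually \emph{false} in general. Take $G=C_6$, so $n=6$, $t_0=\tdnum{C_6}=4$, and $k=2$ (within the allowed range $k\le n-2$). Here $\bondnum{C_6}{2}=3$ exists: deleting alternating edges leaves $3P_2$ with $\tdnum{3P_2}=6=t_0+2$. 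But there is no valid $B$: condition (a) forces $B$ to be two adjacent vertices of the cycle, and then the induced core $G[V\setminus B]$ is $P_4$ with $\tdnum{P_4}=2<4$, violating condition (b). So on this graph your construction cannot even begin; the optimal deletion for $C_6$ necessarily removes an edge \emph{inside} the core as well, and once core deletions are allowed the ``automatic'' bookkeeping disappears --- you then need a witness subgraph of $G$ with at least $M_n(k)-r$ edges, which is precisely the inequality being proved, so the argument becomes circular. The same example defeats your fallback of extracting the matching from a witnessing subgraph $H'$ with $\tdnum{H'}\ge t_0+k$: the components of $H'$ do supply disjoint edges, but condition (b) refers to the subgraph induced in $G$, not in $H'$, and reinstating the $G$-edges inside the core can collapse its total domination number (in $C_6$, from $4$ back down to $2$).

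For comparison, the paper gives no argument at all for this corollary: it is presented as an immediate consequence of Theorem \ref{thm:complete_kbond}, justified only by the remark that complete graphs require removing more edges than any other graph. Your instinct that this is \emph{not} immediate is sound --- since $\tdnum{G}=t_0\ge 2$, the target for $G$ is $t_0+k\ge k+2$ while $G$ has fewer edges than $K_n$, so neither the extremal edge count $M_n(k)$ nor the value $\bondnum{K_n}{k}$ transfers to $G$ in the needed direction --- but the proposal replaces the missing step with an unproved, and in fact false, existence claim, so it does not close the gap.
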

\subsection{Complete Bipartite Graphs}
In this section we consider complete bipartite graphs $K_{a,b}$ with $2 \leq a \leq b$, and we label the vertices $a_i$ and $b_j$ for $i \in \{1,2, \ldots, a\}$ and $j \in \{1,2, \ldots, b\}$. We can not have $a=1$ because the removal of any edge in a star creates an isolated vertex. It is easy to see that $\gamma_t(K_{a,b}) = 2.$
Kulli and Patwari proved the following theorem by removing the set of edges $\{a_ib_i | 1 \leq i \leq a\}$, as shown in Figure \ref{fig:Kab1}.
We provide an alternate method that generalizes more efficiently when considering $\bondnum{K_{a,b}}{k}$. 
\begin{thm}[\cite{1991Bondage}]\label{KabThm0}
     For a complete bipartite graph with $2 \leq a \leq b$, 
\begin{equation}\nonumber
    \bondnum{K_{a,b}}{1} = a.
\end{equation}
\end{thm}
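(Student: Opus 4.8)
The plan is to reduce everything to a characterization of the size-two total dominating sets of $K_{a,b}$. First I would observe that since $K_{a,b}$ is bipartite, any two vertices on the same side are nonadjacent, so a $\tdset$ of cardinality two cannot lie within a single part and must consist of one vertex $a_i$ from the $A$-side together with one vertex $b_j$ from the $B$-side. Such a pair $\set{a_i,b_j}$ totally dominates the graph precisely when every $A$-vertex is adjacent to $b_j$ and every $B$-vertex is adjacent to $a_i$ (these are the only available dominators across the parts), that is, exactly when $a_i$ has full degree $b$ and $b_j$ has full degree $a$. Hence, after deleting an edge set $S$, the resulting graph still satisfies $\tdnum{K_{a,b}\setminus S}=2$ if and only if some $A$-vertex retains all of its edges and some $B$-vertex retains all of its edges.

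With this reformulation the lower bound $\bondnum{K_{a,b}}{1}\geq a$ becomes a pigeonhole count. Each deleted edge lowers the degree of exactly one $A$-vertex, so if $|S|<a$ then at least one $a_i$ is untouched and still has degree $b$; likewise, since $|S|<a\leq b$, at least one $b_j$ is untouched and still has degree $a$. The pair $\set{a_i,b_j}$ is then a TD-set, so $\tdnum{K_{a,b}\setminus S}=2$, and fewer than $a$ deletions can never raise the total domination number.

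For the upper bound I would exhibit an explicit deletion set of size $a$: remove the matching $\set{a_i b_i : 1\leq i\leq a}$, or more suggestively, delete one edge at each of the $a$ vertices of the smaller side so that no two deleted edges share a $B$-endpoint. Because $a\leq b$ such a spreading is possible, and since $a,b\geq 2$ every vertex retains degree at least one, so no isolated vertex is created. After these deletions no $A$-vertex has full degree, so by the characterization no size-two TD-set survives and $\tdnum{K_{a,b}\setminus S}\geq 3$, giving $\bondnum{K_{a,b}}{1}\leq a$. Combining the two bounds yields the result. I would phrase the construction as destroying every full-degree vertex on the smaller side, since this is the formulation that generalizes cleanly to $\bondnum{K_{a,b}}{k}$.

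The proof has no serious obstacle; the one point that needs care is the characterization step, namely verifying that a minimum TD-set cannot lie entirely within one part and that full degree on \emph{both} sides is both necessary and sufficient for a two-element TD-set. The pigeonhole lower bound and the isolated-vertex check in the construction are then routine, and the only place where the hypothesis $a\leq b$ is genuinely used is to guarantee both that the smaller side is the cheaper one to saturate and that the matching can be chosen injectively on the $B$-side.
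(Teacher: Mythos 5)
Your proof is correct, and it is actually more complete than the paper's, though it proceeds differently. The paper's own proof consists solely of an upper-bound construction, and a different one from yours: it concentrates $a-1$ deletions at a single vertex $b_j$, leaving $b_j$ adjacent only to $a_a$, and then removes the one further edge $a_ab_\ell$; the matching lower bound $\bondnum{K_{a,b}}{1}\geq a$ is never written out and is implicitly inherited from the citation to Kulli and Patwari. You, by contrast, prove both inequalities: your characterization (after any deletions, $\gamma_t=2$ holds if and only if some $A$-vertex and some $B$-vertex each retain full degree) yields the pigeonhole lower bound, and your upper-bound deletion set is the matching $\set{a_ib_i : 1\leq i \leq a}$ --- which is precisely the original Kulli--Patwari construction that the paper displays in Figure \ref{fig:Kab1} and then deliberately sets aside. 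Each approach buys something: your argument is self-contained (and your characterization would also instantly justify the paper's construction, whose increase in $\gamma_t$ the paper merely asserts), whereas the paper's asymmetric construction is chosen because it iterates directly to give $\bondnum{K_{a,b}}{k}\leq ka$ in Theorem \ref{KabThm1}, by stripping $k$ distinct $B$-vertices down to one neighbor each. The only claim of yours I would temper is the closing remark that your formulation ``generalizes cleanly'' to $\bondnum{K_{a,b}}{k}$: destroying every full-degree $A$-vertex only forces the total domination number to reach $3$, so for $k\geq 2$ one needs constructions of the paper's iterated type (Theorem \ref{KabThm1}) or the $P_2$-isolation strategy of Theorem \ref{KabThm2}, not a repetition of your matching argument.
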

\begin{proof}
    One could remove the set of edges $\{a_ib_j | 1 \leq i \leq a-1\}$ with fixed $j$, along with $\{a_a b_{\ell}\}$ for fixed $\ell \neq j$.
    This method requires removing $a$ edges to increase the domination number by one. 
\end{proof}

\begin{figure}[h]
    \centering
   \begin{tikzpicture}
            \graph[nodes={draw, circle,minimum size = 1em},
           empty nodes,radius = 1.1cm, branch down=.8 cm,
           grow right sep=4cm] {subgraph I_nm [V={a, b, c}, W={1,...,6}];
  a -- { 2,3,4,5,6};
  b -- { 1,3,4,5,6 };
  c -- { 1,2,4,5,6 };
  
};
\node[tdvert, minimum size = 1em] at (a) {};
\node[tdvert, minimum size = 1em] at (b) {};
\node[tdvert, minimum size = 1em] at (4) {};
        \end{tikzpicture}
    \caption{Kulli and Patwari's method: $\bondnum{K_{3,6}}{1}$}
    \label{fig:Kab1}
\end{figure}

\begin{thm}\label{KabThm1}
For a complete bipartite graph with $k < a \leq b$, 
\begin{equation}\nonumber
     \bondnum{K_{a,b}}{k} \leq ka.
\end{equation}  
\end{thm}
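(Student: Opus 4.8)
The statement is an upper bound, so the plan is to exhibit an explicit set of $ka$ edges whose deletion from $K_{a,b}$ raises the total domination number from $2$ to at least $k+2$. The key preliminary observation I would establish is a clean formula for total domination in any spanning subgraph $H$ of $K_{a,b}$. Writing the two part sets as $A=\set{a_1,\dots,a_a}$ and $B=\set{b_1,\dots,b_b}$, every neighbor of an $A$-vertex lies in $B$ and vice versa, so a set $S$ is a TD-set of $H$ if and only if $S\cap A$ meets $N(b)$ for every $b\in B$ and $S\cap B$ meets $N(a)$ for every $a\in A$. These two requirements constrain disjoint parts of $S$, so $\tdnum{H}=\alpha+\beta$, where $\alpha$ is the minimum number of $A$-vertices needed so that every $B$-vertex has a neighbor among them (a minimum transversal of $\set{N(b):b\in B}$) and $\beta$ is defined symmetrically. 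In $K_{a,b}$ itself $\alpha=\beta=1$, recovering $\tdnum{K_{a,b}}=2$.

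Given this, the goal reduces to forcing $\alpha\ge k+1$ while keeping $\beta\ge 1$ (which is automatic), since then $\tdnum{H}\ge k+2$. First I would partition $A$ into $k+1$ nonempty blocks $P_1,\dots,P_{k+1}$, which is possible exactly because $a\ge k+1$, i.e. $k<a$. For each $i$ I delete every edge from $b_i$ to $A\setminus P_i$, so that after deletion $N(b_i)=P_i$. The number of deleted edges is $\sum_{i=1}^{k+1}(a-|P_i|)=(k+1)a-a=ka$, matching the claimed bound. Because $b\ge a\ge k+1$ the vertices $b_1,\dots,b_{k+1}$ exist, and since every block is nonempty no $b_i$ becomes isolated and each $a_j$ still meets the $b_i$ of its block, so the deletion is legitimate.

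Finally I would verify the lower bound on $\alpha$: the neighborhoods $N(b_1)=P_1,\dots,N(b_{k+1})=P_{k+1}$ are pairwise disjoint and nonempty, so any transversal of $\set{N(b):b\in B}$ must contain a distinct vertex for each block $P_i$, forcing $\alpha\ge k+1$. Combined with $\beta\ge 1$ this gives $\tdnum{H}\ge(k+1)+1=k+2=\tdnum{K_{a,b}}+k$, and hence $\bondnum{K_{a,b}}{k}\le ka$.

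The routine parts are the edge count and the no-isolated-vertex check; the one genuinely load-bearing idea is the partition trick. Using $k+1$ pairwise-disjoint blocks that exhaust $A$ (rather than, say, $k+1$ forced singleton neighborhoods, which would cost $(k+1)(a-1)\ge ka$ edges) is precisely what brings the total down to exactly $ka$. I expect identifying and justifying this efficient construction, together with the transversal characterization $\tdnum{H}=\alpha+\beta$ that makes the disjointness argument immediate, to be the main point to get right.
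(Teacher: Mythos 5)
Your proof is correct and takes essentially the same approach as the paper: the paper's construction (deleting $a-1$ edges at each of $b_1,\dots,b_k$ so that $N(b_j)=\{a_j\}$, then deleting the $k$ edges $a_ib_{k+1}$) is exactly your partition construction in the special case $P_i=\{a_i\}$ for $1\le i\le k$ and $P_{k+1}=\{a_{k+1},\dots,a_a\}$, with the same count of $ka$ deleted edges. Your version is marginally more general (any partition of $A$ into $k+1$ nonempty blocks works) and, unlike the paper's two-sentence proof, explicitly verifies the resulting lower bound $\gamma_t \ge k+2$ via the disjoint-neighborhood/transversal argument, but the underlying idea is identical.
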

\begin{proof}
    To generalize the strategy used in the proof of Theorem \ref{KabThm0}, one can remove $a-1$ edges from each $b_j$ for $j \in \{1,2, \ldots, k\}$ so that each $b_j$ is only adjacent to a unique $a_i$, for $i \in \{1,2, \ldots, k\}$.
    Also, remove the set of edges $\{a_ib_{k+1}\}$ for the same values of $i$.
\end{proof}

An example for this method is shown in Figure \ref{fig:Kab2} for $a=3$ and $b=6$.

\begin{figure}[h]
    \centering
   \begin{tikzpicture}
            \graph[nodes={draw, circle,minimum size = 1em},
           empty nodes,radius = 1.1cm, branch down=.8 cm,
           grow right sep=4cm] {subgraph I_nm [V={a, b, c}, W={1,...,6}];
  a -- { 1,4,5,6};
  b -- { 2,4,5,6 };
  c -- { 3,4,5,6 };
  
};
\node[tdvert, minimum size = 1em] at (a) {};
\node[tdvert, minimum size = 1em] at (b) {};
\node[tdvert, minimum size = 1em] at (c) {};
\node[tdvert, minimum size = 1em] at (4) {};
        \end{tikzpicture}
    \caption{$\bondnum{K_{3,6}}{2} = 6$}
    \label{fig:Kab2}
\end{figure}
We have found cases that show Theorem \ref{KabThm1} is not a strict bound, so we now give another method that in those cases improves on the above strategy. 

\begin{thm}\label{KabThm2}
For a complete bipartite graph with $ \lceil \frac{k}{2}\rceil +1 \leq a \leq b $,
\begin{eqnarray}\nonumber
     \bondnum{K_{a,b}}{k} \leq \left\lceil \frac{k}{2} \right\rceil\left(a+b-\left\lceil\frac{k}{2}\right\rceil-1\right).
\end{eqnarray}  
 
\end{thm}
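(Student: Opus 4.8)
The theorem asserts an upper bound $\bondnum{K_{a,b}}{k} \leq \lceil k/2\rceil(a+b-\lceil k/2\rceil - 1)$, valid when $\lceil k/2\rceil + 1 \leq a \leq b$. Since this is an upper bound on a bondage number, I only need to exhibit a single explicit set of edges whose removal forces the total domination number up by at least $k$. So the plan is entirely constructive: design an edge-deletion set $S$ of size exactly $\lceil k/2\rceil(a+b-\lceil k/2\rceil - 1)$, then argue that $\gamma_t(K_{a,b}\setminus S) \geq k+2$ (recall $\gamma_t(K_{a,b}) = 2$).

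**The construction.**

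Set $m = \lceil k/2\rceil$. The earlier complete-graph results (Theorems~\ref{thm:bt2complete} and~\ref{thm:complete_kbond}) increased $\gamma_t$ by repeatedly splitting off disjoint $P_2$'s, each copy contributing $2$ to the total domination number via Lemma~\ref{lem:tdnum_components}. I expect the same idea here: I would select $m$ vertices $a_1,\dots,a_m$ from the $a$-side and $m$ vertices $b_1,\dots,b_m$ from the $b$-side, pair them as $a_ib_i$, and delete every other edge incident to these $2m$ chosen vertices — that is, delete $a_ib_j$ for all $j\neq i$ and $a_jb_i$ for all $j\neq i$ — so that each pair $\{a_i,b_i\}$ becomes an isolated $P_2$ component. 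The hypothesis $m+1\leq a$ guarantees there are enough vertices on the smaller side both to supply the $m$ pairs and to leave at least two vertices on each side so the remaining graph $K_{a-m,b-m}$ still has no isolated vertices and contributes $\gamma_t = 2$. Summing contributions gives $\gamma_t \geq 2m + 2 \geq k+2$, the required increase of at least $k$.

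**Counting the deleted edges.**

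The main bookkeeping step is verifying the edge count equals $m(a+b-m-1)$. Isolating the $i$-th pair $\{a_i,b_i\}$ requires cutting $a_i$ from all $b$-side vertices except $b_i$ (that is $b-1$ edges) and $b_i$ from all $a$-side vertices except $a_i$ (that is $a-1$ edges), but the edges $a_ib_j$ with $j\leq m$ and $a_jb_i$ with $j\leq m$ would be double-counted across pairs, so I would instead count directly: each of the $m$ chosen $a$-side vertices loses all $b$ edges except its partner, and likewise each chosen $b$-side vertex, with the $m(m-1)$ edges between distinct chosen pairs (namely $a_ib_j$, $i\neq j$, $i,j\leq m$) counted once. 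A clean way is to count the surviving edges incident to the $2m$ chosen vertices (only the $m$ matching edges survive) and subtract from the total incident count; I anticipate this collapses to exactly $m(a+b-m-1)$, matching the claimed bound.

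**The main obstacle.**

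The genuine difficulty is the edge count rather than the domination argument, since the domination conclusion follows immediately from Lemma~\ref{lem:tdnum_components} once the components are identified. The subtle point is avoiding double-counting the $\binom{m}{2}$-type edges running between two chosen pairs on opposite sides, and confirming the closed form. I would organize the count as: total edges deleted $= \sum_{i=1}^m(\text{edges cut to isolate pair } i) = m(a-1) + m(b-1) - (\text{edges removed twice})$, where an edge $a_ib_j$ with both endpoints chosen and $i\neq j$ is cut when isolating pair $i$ and again when isolating pair $j$; there are $m(m-1)$ such ordered incidences, i.e.\ $m(m-1)$ edges double-counted, giving $m(a-1)+m(b-1)-m(m-1) = m(a+b-m-1)$, exactly the stated bound.
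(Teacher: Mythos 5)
Your proposal is correct and takes essentially the same approach as the paper: both isolate $m=\lceil k/2\rceil$ pairs $\{a_i,b_i\}$ as $P_2$ components, leaving a smaller complete bipartite graph with $\gamma_t=2$, and arrive at the same edge count $m(a+b-m-1)$ (the paper counts $a+b-2i$ edges for the $i$-th pair sequentially and sums, while you count by inclusion--exclusion over the $2m$ chosen vertices --- the same total). One trivial slip worth noting: the hypothesis $\lceil k/2\rceil+1\leq a$ guarantees only one (not two) remaining vertex on the smaller side, but that is all the argument needs, since $K_{a-m,b-m}$ with both sides nonempty has no isolated vertices and total domination number $2$.
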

\begin{proof}
    Consider removing the set of edges $\{a_1b_j \mid 2 \leq j \leq b \} \cup \{a_ib_1 \mid 2 \leq i \leq a \}$.
    This disconnects the vertices $a_1$ and $b_1$ from the rest of the graph, and will therefore increase the domination number by two.
    This strategy can be generalized by disconnecting $\left\lceil \frac{k}{2} \right\rceil$ pairs of vertices $\{a_i, b_i\}$ to increase the domination number by at least $k$, which requires removing $a+b-2i$ edges to isolate the $i\textsuperscript{{\rm th}}$ copy of $P_2$ from the graph. 
\end{proof}
\begin{figure}[h]
    \centering
    \begin{tikzpicture}
            \graph[nodes={draw, circle,minimum size = 1em},
           empty nodes,radius = 1.1cm, branch down=.8 cm,
           grow right sep=4cm] {subgraph I_nm [V={a, b, c,d}, W={1,...,5}];
  a -- { 1};
  b -- { 2,3,4,5 };
  c -- { 2,3,4,5 };
  d -- {2,3,4,5};
};
\node[tdvert, minimum size = 1em] at (a) {};
\node[tdvert, minimum size = 1em] at (b) {};
\node[tdvert, minimum size = 1em] at (1) {};
\node[tdvert, minimum size = 1em] at (2) {};
        \end{tikzpicture}
    \caption{$\bondnum{K_{4,5}}{2} = 7$}
    \label{fig:Kab3}
\end{figure}

Figure \ref{fig:Kab3} demonstrates the method from Theorem $\ref{KabThm2}$ for $a=4$ and $b=5$.
Theorems \ref{KabThm1} and \ref{KabThm2} yield different upper bounds on the total bondage number depending on the parity of $k$ and sizes of $a$ and $b$.
Theorem \ref{KabThm1} gives a stricter bound for even $k$ when $a  <  b - \frac{k}{2} - 1$.
For odd $k$ we have a similar result, that Theorem \ref{KabThm1} gives a stricter bound when $a  < b\left( \frac{k+1}{k-1}\right) - \frac{k-1}{2}$.
Likewise, when the sign is flipped Theorem \ref{KabThm2} gives a stricter bound.
In the case that the total bondage number is equal to either bound given by Theorem \ref{KabThm1} or \ref{KabThm2}, the bounds are equal.
Note that in some cases we would start with one strategy and then change as $k$ increases.

We now give one last strategy that only works for the case $\bondnum{K_{a,b}}{2}$. This strategy improves on Theorems \ref{KabThm1} and \ref{KabThm2} when $b<2a$. 

\begin{thm}\label{KabThm3}
    For a complete bipartite graph with $2 \leq a \leq b \leq 2a$, 
\begin{eqnarray}\nonumber
     \bondnum{K_{a,b}}{2} = b.
\end{eqnarray}  
\end{thm}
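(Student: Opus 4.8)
The plan is to prove the two inequalities $\bondnum{K_{a,b}}{2}\le b$ and $\bondnum{K_{a,b}}{2}\ge b$ separately, and in both directions the key bookkeeping device is to track which vertices still have full degree after deleting a set $S$ of edges. Write $A^{\ast}$ for the set of $a$-side vertices that remain adjacent to all of $B$, and $B^{\ast}$ for the $b$-side vertices that remain adjacent to all of $A$. Since $K_{a,b}\setminus S$ is bipartite, any TD-set must contain $b$-side vertices whose neighborhoods cover all of $A$ and $a$-side vertices whose neighborhoods cover all of $B$; a short case analysis on how a set of size $2$ or $3$ splits between the two sides then gives the reduction I want: a TD-set of size $2$ exists iff $A^{\ast}\neq\emptyset$ and $B^{\ast}\neq\emptyset$, while a TD-set of size $3$ must have the form $\{a_i,a_{i'},b_j\}$ with $b_j\in B^{\ast}$ and $N(a_i)\cup N(a_{i'})=B$, or symmetrically $\{a_i,b_j,b_{j'}\}$ with $a_i\in A^{\ast}$ and $N(b_j)\cup N(b_{j'})=A$. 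This converts the whole theorem into statements about $A^{\ast}$, $B^{\ast}$, and pairs of $a$-side vertices that jointly cover $B$.

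For the upper bound I would exhibit an explicit deletion set of size $b$ that empties both $A^{\ast}$ and $B^{\ast}$ simultaneously. Delete the matching $\{a_ib_i : 1\le i\le a\}$ together with $\{a_i b_{a+i} : 1\le i\le b-a\}$, which is possible precisely because $b-a\le a$. This is an edge cover of $K_{a,b}$, so every vertex loses at least one edge, giving $A^{\ast}=B^{\ast}=\emptyset$, and one checks directly that every vertex retains degree at least $1$. By the reduction above neither a size-$2$ nor a size-$3$ TD-set can survive, so $\gamma_t$ jumps to at least $4$, proving $\bondnum{K_{a,b}}{2}\le b$.

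For the lower bound I would show that deleting any $S$ with $|S|\le b-1$ still leaves a TD-set of size at most $3$. Since each deleted edge meets exactly one $b$-side vertex, emptying $B^{\ast}$ would cost at least $b$ deletions, so with $\le b-1$ deletions $B^{\ast}\neq\emptyset$; fix some $b_{j_0}\in B^{\ast}$. If $A^{\ast}\neq\emptyset$ we already get a size-$2$ TD-set, so assume $A^{\ast}=\emptyset$, meaning every $a$-side vertex has lost at least one edge. This is where the hypothesis $b\le 2a$ enters, and it is the crux of the argument: the deleted edges contribute total $a$-side degree $|S|\le b-1\le 2a-1$, so the $a$ vertices of the $a$-side cannot all have lost $\ge 2$ edges, forcing some $a_1$ that lost exactly one edge, say the edge to $b_{j_1}$. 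I expect this counting step to be the main obstacle to isolate cleanly, since it is exactly $b\le 2a$ that guarantees a degree-one vertex; without it the conclusion fails, consistent with the bound being superseded by Theorem \ref{KabThm1} once $b>2a$.

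To finish, note that deleting the single edge at $a_1$ cannot have isolated $b_{j_1}$ (isolated vertices are disallowed), so some $a_{i'}\neq a_1$ is still adjacent to $b_{j_1}$. Then $a_1$ and $a_{i'}$ have disjoint deleted-neighborhoods, whence $N(a_1)\cup N(a_{i'})=B$, and the set $\{a_1,a_{i'},b_{j_0}\}$ dominates every vertex: $b_{j_0}$ covers all of $A$ and the pair $a_1,a_{i'}$ covers all of $B$. This yields $\gamma_t\le 3$, so deleting $b-1$ edges never raises the total domination number by $2$, giving $\bondnum{K_{a,b}}{2}\ge b$. Combining the two bounds gives the claimed equality.
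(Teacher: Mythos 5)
Your proof is correct and follows essentially the same route as the paper's: an explicit edge cover of size $b$ (yours spreads the $b-a$ extra deletions over $a_1,\dots,a_{b-a}$ rather than concentrating them at $a_a$, an immaterial difference) for the upper bound, and for the lower bound the same counting argument that $b-1$ deletions leave some $b_{j_0}$ of full degree and, since $b-1<2a$, some $a_1$ missing exactly one edge, yielding a TD-set of size $3$. Your $A^{\ast}/B^{\ast}$ reduction merely makes rigorous the paper's terse claim that two vertices are then needed from each side.
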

\begin{proof}

    Consider removing the edges $\{a_ib_i \mid 1 \leq i \leq a\} \cup \{a_ab_i \mid a+1 \leq i \leq b\}$.
    We must have two vertices from each part in the dominating set since each vertex is disconnected from at least one in the other part. 
    Therefore $\bondnum{K_{a,b}}{2} \leq b$.
    Suppose we remove only $b-1$ edges.
    Then there exists some $b_j$, say $b_1$, adjacent to all $a_i$. Note that each $a_i$ must be missing at least one edge, otherwise the total domination number is still two.  
    However, there must exist some $a_i$ that is missing only one edge since $b-1 < 2a$. Without loss of generality suppose $a_1$ is only missing the edge to $b_2$.
    Note that $b_2$ must be adjacent to some other vertex, say $a_2$, so $\{b_1,a_1,a_2\}$ dominates the graph. 
\end{proof}
\begin{figure}[h]
    \centering
   \begin{tikzpicture}
            \graph[nodes={draw, circle,minimum size = 1em},
           empty nodes,radius = 1.1cm, branch down=.8 cm,
           grow right sep=4cm] {subgraph I_nm [V={a, b, c}, W={1,...,5}];
  a -- { 2,3,4,5};
  b -- { 1,3,4,5 };
  c -- { 1,2 };
  
};
\node[tdvert, minimum size = 1em] at (a) {};
\node[tdvert, minimum size = 1em] at (b) {};
\node[tdvert, minimum size = 1em] at (1) {};
\node[tdvert, minimum size = 1em] at (2) {};
        \end{tikzpicture}
    \caption{$\bondnum{K_{3,5}}{2} = 5$}
    \label{fig:Kab4}
\end{figure}
Figure \ref{fig:Kab4} shows the method from Theorem $\ref{KabThm3}$ for $a=3$ and $b=5$.
\section{Graph Construction for $k$-total Bondage}
Kulli and Patwari showed in \cite{1991Bondage} that for any $b$ it is possible to construct a graph with $\bondnum{G}{1}=b$. In this section we show similar results for $\bondnum{G}{k}$.
\begin{thm}
\label{thm:btk_for_any_k_m}
    For any positive integers $k$ and $b \geq \frac{k}{2}$, there exists a graph $G^k_b$ where $\bondnum{G^k_b}{k} = b$.
\end{thm}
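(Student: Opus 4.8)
The plan is to realize every admissible value of $\bondnum{G^k_b}{k}$ by a disjoint union of small \emph{capped} gadgets, so that additivity of $\tdnum{\cdot}$ over components (Lemma \ref{lem:tdnum_components}) turns the computation into transparent bookkeeping. First I would record the bound that forces the hypothesis: by Theorem \ref{thm:max_edge_removal_bondage_diff} each deleted edge raises $\tdnum{\cdot}$ by at most $2$, so $\bondnum{G}{k} \ge \ceil{k/2}$ for every $G$; since $b$ is an integer, $b \ge k/2$ is exactly $b \ge \ceil{k/2}$. The case $k=1$ is Kulli and Patwari's realization result \cite{1991Bondage}, so I may assume $k \ge 2$ and set $m = \ceil{k/2}$.

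The building blocks are graphs whose total domination number cannot be pushed up far. For $c \ge 1$ let $H_c$ be the \emph{$+2$-capped} gadget with $H_1 = P_4$ and $H_c = K_{2,c}$ for $c \ge 2$; note $\tdnum{H_c}=2$. Two facts need proof: (i) a \emph{cap} $\tdnum{H_c \setminus F} \le 4$ for every deletion set $F$ (for $P_4$ because any $4$-vertex graph with no isolated vertex has $\tdnum{}\le 4$, and for $K_{2,c}$ because, writing the parts $\set{a_1,a_2}$ and $\set{b_1,\dots,b_c}$, the set $\set{a_1,a_2,b,b'}$ with $b\in N(a_1)$ and $b'\in N(a_2)$ is always a TD-set); and (ii) $\bondnum{H_c}{2}=c$, whose upper bound comes from Theorem \ref{KabThm2} (which for $a=2$ gives $c$) together with the direct deletion on $P_4$, and whose lower bound is the observation that if fewer than $c$ edges are deleted then some $b_j$ keeps both its edges, making $\set{a_1,a_2,b_j}$ a TD-set and forcing $\tdnum{}\le 3$. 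I also use the \emph{$+1$-capped} gadget $P_5$: here $\tdnum{P_5}=3$, any deletion keeps $\tdnum{}\le 4$ (a $5$-vertex graph with no isolated vertex has $\tdnum{}\le 4$), so $P_5$ can rise by at most $1$, while $\bondnum{P_5}{1}=1$ since deleting an interior edge yields $P_2 \sqcup P_3$.

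With these gadgets the construction is: for even $k=2m$ take $G^k_b = H_{c_1} \sqcup \cdots \sqcup H_{c_m}$, and for odd $k=2m-1$ take $G^k_b = H_{c_1} \sqcup \cdots \sqcup H_{c_{m-1}} \sqcup P_5$, choosing integers $c_i \ge 1$ with sum $b$ (even) or $b-1$ (odd); such $c_i$ exist because $b \ge m$. For the upper bound, delete the optimal $c_i$ edges inside each $H_{c_i}$ (raising each by $2$) and, in the odd case, the single edge of $P_5$ (raising it by $1$); by Lemma \ref{lem:tdnum_components} the total rise is $2m=k$ (even) or $2(m-1)+1=k$ (odd) using exactly $b$ deletions, so $\bondnum{G^k_b}{k}\le b$. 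For the lower bound, Lemma \ref{lem:tdnum_components} writes the rise of any deletion set $F$ as $\sum_i d_i$, where $d_i$ is the rise in component $i$; the caps give $d_i \le 2$ on each $H_{c_i}$ and $d_{P_5}\le 1$, so the total is at most $k$ with \emph{no slack}. Hence reaching a rise of $k$ forces every $d_i$ to equal its cap, and attaining $d_i=2$ inside $H_{c_i}$ costs at least $\bondnum{H_{c_i}}{2}=c_i$ (and $d_{P_5}=1$ costs at least $\bondnum{P_5}{1}=1$), whence $|F|\ge b$.

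The delicate point, which I expect to be the main obstacle, is precisely the combination of the caps with the equality $\bondnum{K_{2,c}}{2}=c$ for all $c$ (Theorem \ref{KabThm3} only covers $c \le 4$, so the range $c \ge 5$ needs the untouched-$b_j$ argument above). These are exactly what prevent the minimizer from ``reshuffling'' the required increase across components: the caps make the total attainable rise equal to $k$ with no room to spare, ruling out the cheap alternative of driving one large gadget up by a single step while leaving another untouched. Without the cap — if some gadget could rise by more than $2$ — the optimization over how to distribute the $k$ units of increase would reopen and the clean value $b$ could fail, so establishing the two gadget caps and $\bondnum{K_{2,c}}{2}=c$ rigorously is the technical heart of the proof.
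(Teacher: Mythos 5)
Your proposal is correct, but it takes a genuinely different route from the paper's proof. You build $G^k_b$ as a \emph{disjoint union} of capped gadgets ($P_4$ or $K_{2,c}$, each able to rise by at most $2$, plus a single $P_5$ able to rise by at most $1$ when $k$ is odd) and let Lemma \ref{lem:tdnum_components} reduce both bounds to component-wise bookkeeping. Your technical inputs check out: the cap $\tdnum{K_{2,c}\setminus F}\leq 4$ via the set $\set{a_1,a_2,b,b'}$, the exact value $\bondnum{K_{2,c}}{2}=c$ (upper bound by disconnecting a $P_2$, matching Theorem \ref{KabThm2} with $a=2$; lower bound by the untouched-$b_j$ argument, and you correctly note that Theorem \ref{KabThm3} only covers $c\leq 4$), and the observation that the caps sum to exactly $k$, which is what forces every component to its cap and makes the lower bound $|F|\geq b$ clean. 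The paper instead constructs a single \emph{connected} graph by vertex-merging $m$ copies of a spider $T$ (a $P_5$ with a pendant vertex at its center) with a $K_n\setminus\set{e}$ and small attachments, then runs a case analysis on the parity of $k$ and on $r=b-m$. Your approach buys modularity and a fully transparent lower-bound argument (arguably more rigorous than the paper's verification that its constructions cannot be beaten); what it gives up is connectivity. The paper's standing convention (Section 2) is that initial graphs are connected, and its merged construction respects that, whereas your disjoint unions do not; note that this is not a cosmetic fix in your framework, since joining or merging the gadgets destroys the additivity from Lemma \ref{lem:tdnum_components} on which both of your bounds rest --- absorbing exactly that interaction is what the paper's case analysis is for. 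Since the theorem as stated asks only for ``a graph,'' your construction does prove the literal statement, but you should flag the connectivity discrepancy explicitly.
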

\begin{proof}
By Theorem \ref{thm:max_edge_removal_bondage_diff}, we can increase $\gamma_t(G)$ by at most two for each edge we remove, so $b \geq \frac{k}{2}$. Given two graphs $G$ and $H$ with $g \in V(G)$ and $h \in V(H)$ we will denote $G_g \oplus H_h$ as the merge of $G$ and $H$ at $g$ and $h$. 

Let $T = (P_5)_{v_3} \oplus (P_2)_{u_1}$ where $P_i = a_1a_2\ldots a_i$. Let $m = \lceil \frac{k-2}{2}\rceil$ and let $T^m$ denote the merge of $m$ copies of $T$ each at $v_1$ and let $x$ denote the merged vertices $v_1$. Let $T_{n}^m = (T^m)_x \oplus (K_{n}\setminus \{e\})_{a'}$, where $e = aa'$. Note that $\tdnum{T^m_{n}} = 2m+2$ and $\bondnum{T^m_{n}}{2i} = i$ for $1 \leq i \leq m$. All of the cases below start by removing $m$ edges (one from each copy of $T$), so let $r=b-m$ be the number of edges left to remove after this first step.\\

\begin{case}[1]
    Assume $k$ is odd. 
    For $r = 1$, consider $G_b^k = (T^m_{5})_x  \oplus (P_3)_{v_1}$. Note that $\tdnum{G_b^k} = 2m+3$.  We can increase the total dominating number by $k$ by removing one edge from each of the $m=\frac{k-1}{2}$ copies of $T$ as well as one edge from the $P_3$. This results in a graph $\tilde{G}^k_b$ which is the disjoint union of $m+1$ copies of $P_2$ with the merge of $m$ copies of $P_4$ and $K_{5} \setminus \{e\}$. The total dominating set must contain all the vertices in the $P_2$s as well as two vertices from each $P_4$ and two vertices from the $K_{5} \setminus \{e\}$. Therefore the new total dominating number is $\tdnum{\tilde{G}_b^k } = (2m+2) + 2m + 2 = 4m+4 = 2m+3 + k$. See Figure \ref{fig:kOddrOne} for details.

    For $r = 2$ consider $G_b^k = (T^m_{5})_x \oplus (K_4 \setminus \{e\})_{a'}$ where $e=aa'$. Note that $\tdnum{G_b^k} = 2m+3$.  We can increase the total domination number by $k-1$ by removing one edge from each of the $m=\frac{k-1}{2}$ copies of $T$. We can increase the total domination number one more by removing two edges from the $K_4 \setminus \{e\}$ component. See Figure \ref{fig:kOddrTwo} for details.

    For $r \geq 3$ consider $G_b^k = T^m_{r+1}$. Note that $\tdnum{G_b^k} = 2m+2$.  We can increase the total domination number by $k-1$ by removing one edge from each of the $m=\frac{k-1}{2}$ copies of $T$. We can increase the total domination number one more by removing $r-1$ edges to disconnect $x$ from $K_{r+1} \setminus \{e\}$, and one more edge to disconnect a $P_2$ from one of the copies of $T$. This is the fewest number of edges that can be removed to disconnect part of the complete graph, and without doing so, it is always possible to find a smaller total dominating set. See Figure \ref{fig:kOddrGEQThree} for details.

\end{case}

\begin{case}[2] Assume $k$ is even.

    For $r=1$ then $b = \frac{k}{2}$ and $\bondnum{T^{b}_{5}}{k} = b$.

    For $r = 2$ consider $G^b_k = (T^m_{5})_x \oplus (P_5)_{v_3}$.  

    For $r=3$ consider $G^b_k = (T^m_{5})_x  \oplus (P_3)_{v_1} \oplus (K_4 \setminus \{e\})_v$ where $e$ is an edge incident to $v$. 

    For $r \geq 4$ consider $G^b_k = (T^m_{r})_x \oplus (P_3)_{v_1}$.
\end{case}
\end{proof}
\begin{figure}[h]
    \begin{center}
        \begin{subfigure}[b]{0.45\textwidth}
            \centering
            \begin{tikzpicture}[scale=.8, transform shape]
                \node[tdvert, draw, circle, minimum size=0.3cm] (x) at (3,3) {$x$};
                \node[draw, circle, minimum size=0.3cm] (p_1) at (3,4.5) {};
                \node[tdvert, draw, circle, minimum size=0.3cm] (p_2) at (3,3.75) {};
                \node[tdvert, draw, circle, minimum size=0.3cm] (x) at (3,3) {$x$};
                \node[draw, circle, minimum size=0.3cm] (v_1) at (2,3.75) {};
                \node[tdvert, circle, minimum size=0.3cm] (v_2) at (1,3.5) {};
                \node[fill = white, draw, circle, minimum size=0.3cm] (v_3) at (2,2.25) {$a$};
                \node[draw, circle, minimum size=0.3cm] (v_4) at (1,2.5) {};
                \node[draw, circle, minimum size=0.3cm] (p1_1) at (3.75,3) {};
                \node[tdvert, draw, circle, minimum size=0.3cm] (p1_2) at (4.5,3) {};
                \node[draw, circle, minimum size=0.3cm] (p1_d) at (4.875,2.5) {};
                \node[tdvert, draw, circle, minimum size=0.3cm] (p1_3) at (5.25,3) {};
                \node[draw, circle, minimum size=0.3cm] (p1_4) at (6,3) {};
                \node[draw, circle, minimum size=0.3cm] (p2_1) at (3.75,4) {};
                \node[tdvert, draw, circle, minimum size=0.3cm] (p2_2) at (4.5,4) {};
                \node[draw, circle, minimum size=0.3cm] (p2_d) at (4.875,3.5) {};
                \node[tdvert, draw, circle, minimum size=0.3cm] (p2_3) at (5.25,4) {};
                \node[draw, circle, minimum size=0.3cm] (p2_4) at (6,4) {};
                \node[draw, circle, minimum size=0.3cm] (p3_1) at (3.75,2) {};
                \node[tdvert, draw, circle, minimum size=0.3cm] (p3_2) at (4.5,2) {};
                \node[draw, circle, minimum size=0.3cm] (p3_d) at (4.875,1.5) {};
                \node[tdvert, draw, circle, minimum size=0.3cm] (p3_3) at (5.25,2) {};
                \node[draw, circle, minimum size=0.3cm] (p3_4) at (6,2) {};
                    \graph{
                        (x) -- {(p1_1), (p2_1), (p3_1)};
                        (p1_1) -- (p1_2); (p1_3) -- (p1_4); (p1_2) -- (p1_d); (p1_2) -- (p1_3);
                        (p2_1) -- (p2_2); (p2_3) -- (p2_4); (p2_2) -- (p2_d); (p2_2) -- (p2_3);
                        (p3_1) -- (p3_2); (p3_3) -- (p3_4); (p3_2) -- (p3_d); (p3_2) -- (p3_3);
                        (p_1) -- (p_2); (p_2) -- (x);
                        (x)  -- {(v_1), (v_2), (v_4)};
                        (v_1) -- {(v_2), (v_3), (v_4)};
                        (v_2) -- {(v_3), (v_4)};
                        (v_3) -- {(v_4)};
                        };
            \end{tikzpicture}
            \caption{$G_4^7 = (T^3_{5})_x  \oplus (P_3)_{v_1}$}
        \end{subfigure}
        \begin{subfigure}[b]{0.45\textwidth}
            \centering
            \begin{tikzpicture}[scale=.8, transform shape]
                \node[tdvert, draw, circle, minimum size=0.3cm] (x) at (3,3) {$x$};
                \node[tdvert, draw, circle, minimum size=0.3cm] (p_1) at (3,4.5) {};
                \node[tdvert, draw, circle, minimum size=0.3cm] (p_2) at (3,3.75) {};
                \node[tdvert, draw, circle, minimum size=0.3cm] (x) at (3,3) {$x$};
                \node[draw, circle, minimum size=0.3cm] (v_1) at (2,3.75) {};
                \node[tdvert, circle, minimum size=0.3cm] (v_2) at (1,3.5) {};
                \node[fill = white, draw, circle, minimum size=0.3cm] (v_3) at (2,2.25) {$a$};
                \node[draw, circle, minimum size=0.3cm] (v_4) at (1,2.5) {};
                \node[tdvert, draw, circle, minimum size=0.3cm] (p1_1) at (3.75,3) {};
                \node[tdvert, draw, circle, minimum size=0.3cm] (p1_2) at (4.5,3) {};
                \node[draw, circle, minimum size=0.3cm] (p1_d) at (4.875,2.5) {};
                \node[tdvert, draw, circle, minimum size=0.3cm] (p1_3) at (5.25,3) {};
                \node[tdvert, draw, circle, minimum size=0.3cm] (p1_4) at (6,3) {};
                \node[tdvert, draw, circle, minimum size=0.3cm] (p2_1) at (3.75,4) {};
                \node[tdvert, draw, circle, minimum size=0.3cm] (p2_2) at (4.5,4) {};
                \node[draw, circle, minimum size=0.3cm] (p2_d) at (4.875,3.5) {};
                \node[tdvert, draw, circle, minimum size=0.3cm] (p2_3) at (5.25,4) {};
                \node[tdvert, draw, circle, minimum size=0.3cm] (p2_4) at (6,4) {};
                \node[tdvert, draw, circle, minimum size=0.3cm] (p3_1) at (3.75,2) {};
                \node[tdvert, draw, circle, minimum size=0.3cm] (p3_2) at (4.5,2) {};
                \node[draw, circle, minimum size=0.3cm] (p3_d) at (4.875,1.5) {};
                \node[tdvert, draw, circle, minimum size=0.3cm] (p3_3) at (5.25,2) {};
                \node[tdvert, draw, circle, minimum size=0.3cm] (p3_4) at (6,2) {};
                    \graph{
                        (x) -- {(p1_1), (p2_1), (p3_1)};
                        (p1_1) -- (p1_2); (p1_3) -- (p1_4); (p1_2) -- (p1_d); 
                        (p2_1) -- (p2_2); (p2_3) -- (p2_4); (p2_2) -- (p2_d); 
                        (p3_1) -- (p3_2); (p3_3) -- (p3_4); (p3_2) -- (p3_d); 
                        (p_1) -- (p_2); 
                        (x)  -- {(v_1), (v_2), (v_4)};
                        (v_1) -- {(v_2), (v_3), (v_4)};
                        (v_2) -- {(v_3), (v_4)};
                        (v_3) -- {(v_4)};
                    };
            \end{tikzpicture}
                \caption{$b_t^7(G_4^7) =4$}
                \end{subfigure}
            \caption{Case 1 when $k$ odd and $r =1$. For example $k = 7$ and $b = 4$} 
            \label{fig:kOddrOne}
    \end{center}
\end{figure}
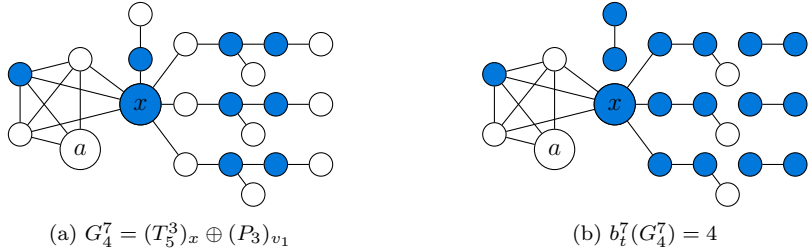

\begin{figure}[h]
    \begin{center}
        \begin{subfigure}[b]{0.45\textwidth}
            \centering
            \begin{tikzpicture}[scale=.8, transform shape]
                \node[tdvert, draw, circle, minimum size=0.3cm] (x) at (3,3) {$x$};
                \node[draw, circle, minimum size=0.3cm] (p_1) at (3,4.5) {};
                \node[tdvert, draw, circle, minimum size=0.3cm] (p_2) at (2.75,4) {};
                \node[draw, circle, minimum size=0.3cm] (p_3) at (3.25,4) {};
                \node[tdvert, draw, circle, minimum size=0.3cm] (x) at (3,3) {$x$};
                \node[draw, circle, minimum size=0.3cm] (v_1) at (2,3.75) {};
                \node[tdvert, circle, minimum size=0.3cm] (v_2) at (1,3.5) {};
                \node[fill = white, draw, circle, minimum size=0.3cm] (v_3) at (2,2.25) {$a$};
                \node[draw, circle, minimum size=0.3cm] (v_4) at (1,2.5) {}; 
                \node[draw, circle, minimum size=0.3cm] (p1_1) at (3.75,3) {};
                \node[tdvert, draw, circle, minimum size=0.3cm] (p1_2) at (4.5,3) {};
                \node[draw, circle, minimum size=0.3cm] (p1_d) at (4.875,2.5) {};
                \node[tdvert, draw, circle, minimum size=0.3cm] (p1_3) at (5.25,3) {};
                \node[draw, circle, minimum size=0.3cm] (p1_4) at (6,3) {};
                \node[draw, circle, minimum size=0.3cm] (p2_1) at (3.75,4) {};
                \node[tdvert, draw, circle, minimum size=0.3cm] (p2_2) at (4.5,4) {};
                \node[draw, circle, minimum size=0.3cm] (p2_d) at (4.875,3.5) {};
                \node[tdvert, draw, circle, minimum size=0.3cm] (p2_3) at (5.25,4) {};
                \node[draw, circle, minimum size=0.3cm] (p2_4) at (6,4) {};
                \node[draw, circle, minimum size=0.3cm] (p3_1) at (3.75,2) {};
                \node[tdvert, draw, circle, minimum size=0.3cm] (p3_2) at (4.5,2) {};
                \node[draw, circle, minimum size=0.3cm] (p3_d) at (4.875,1.5) {};
                \node[tdvert, draw, circle, minimum size=0.3cm] (p3_3) at (5.25,2) {};
                \node[draw, circle, minimum size=0.3cm] (p3_4) at (6,2) {};
                    \graph{
                        (x) -- {(p1_1), (p2_1), (p3_1)};
                        (p1_1) -- (p1_2); (p1_3) -- (p1_4); (p1_2) -- (p1_d); (p1_2) -- (p1_3);
                        (p2_1) -- (p2_2); (p2_3) -- (p2_4); (p2_2) -- (p2_d); (p2_2) -- (p2_3);
                        (p3_1) -- (p3_2); (p3_3) -- (p3_4); (p3_2) -- (p3_d); (p3_2) -- (p3_3);
                        (p_1) -- (p_2); (p_2) -- (x);
                        (p_3) -- (x); (p_3) -- (p_1);
                        (x)  -- {(v_1), (v_2), (v_4)};
                        (v_1) -- {(v_2), (v_3), (v_4)};
                        (v_2) -- {(v_3), (v_4)};
                        (v_3) -- {(v_4)};
                        
                    };
            \end{tikzpicture}
            \caption{$G_5^7 = (T^3_{5})_x \oplus (K_4 \setminus \{e\})_{a'}$}
        \end{subfigure}
        \begin{subfigure}[b]{0.45\textwidth}
            \centering
            \begin{tikzpicture}[scale=.8, transform shape]
                \node[tdvert, draw, circle, minimum size=0.3cm] (x) at (3,3) {$x$};
                \node[tdvert, draw, circle, minimum size=0.3cm] (p_1) at (3,4.5) {};
                \node[tdvert, draw, circle, minimum size=0.3cm] (p_2) at (2.75,4) {};
                \node[draw, circle, minimum size=0.3cm] (p_3) at (3.25,4) {};

                \node[tdvert, draw, circle, minimum size=0.3cm] (x) at (3,3) {$x$};
                \node[draw, circle, minimum size=0.3cm] (v_1) at (2,3.75) {};
                \node[tdvert, circle, minimum size=0.3cm] (v_2) at (1,3.5) {};
                \node[fill = white, draw, circle, minimum size=0.3cm] (v_3) at (2,2.25) {$a$};
                \node[draw, circle, minimum size=0.3cm] (v_4) at (1,2.5) {};
                \node[tdvert, draw, circle, minimum size=0.3cm] (p1_1) at (3.75,3) {};
                \node[tdvert, draw, circle, minimum size=0.3cm] (p1_2) at (4.5,3) {};
                \node[draw, circle, minimum size=0.3cm] (p1_d) at (4.875,2.5) {};
                \node[tdvert, draw, circle, minimum size=0.3cm] (p1_3) at (5.25,3) {};
                \node[tdvert, draw, circle, minimum size=0.3cm] (p1_4) at (6,3) {};
                \node[tdvert, draw, circle, minimum size=0.3cm] (p2_1) at (3.75,4) {};
                \node[tdvert, draw, circle, minimum size=0.3cm] (p2_2) at (4.5,4) {};
                \node[draw, circle, minimum size=0.3cm] (p2_d) at (4.875,3.5) {};
                \node[tdvert, draw, circle, minimum size=0.3cm] (p2_3) at (5.25,4) {};
                \node[tdvert, draw, circle, minimum size=0.3cm] (p2_4) at (6,4) {};
                \node[tdvert, draw, circle, minimum size=0.3cm] (p3_1) at (3.75,2) {};
                \node[tdvert, draw, circle, minimum size=0.3cm] (p3_2) at (4.5,2) {};
                \node[draw, circle, minimum size=0.3cm] (p3_d) at (4.875,1.5) {};
                \node[tdvert, draw, circle, minimum size=0.3cm] (p3_3) at (5.25,2) {};
                \node[tdvert, draw, circle, minimum size=0.3cm] (p3_4) at (6,2) {};
                    \graph{
                        (x) -- {(p1_1), (p2_1), (p3_1)};
                        (p1_1) -- (p1_2); (p1_3) -- (p1_4); (p1_2) -- (p1_d); 
                        (p2_1) -- (p2_2); (p2_3) -- (p2_4); (p2_2) -- (p2_d); 
                        (p3_1) -- (p3_2); (p3_3) -- (p3_4); (p3_2) -- (p3_d); 
                        (p_1) -- (p_2); (p_3) -- (p_1); 
                        (x)  -- {(v_1), (v_2), (v_4)};
                        (v_1) -- {(v_2), (v_3), (v_4)};
                        (v_2) -- {(v_3), (v_4)};
                        (v_3) -- {(v_4)};
                    };
            \end{tikzpicture}
                \caption{$b_t^7(G_5^7) =5$}
                \end{subfigure}
            \caption{Case 1 when $k$ odd and $r =2$. For example $k = 7$ and $b = 5$} 
            \label{fig:kOddrTwo}
    \end{center}
\end{figure}

\begin{figure}[h]
    \begin{center}
        \begin{subfigure}[b]{0.45\textwidth}
            \centering
            \begin{tikzpicture}[scale=.8, transform shape]
                \node[tdvert, draw, circle, minimum size=0.3cm] (x) at (3,3) {$x$};
                \node[draw, circle, minimum size=0.3cm] (v_1) at (2.25,3.75) {};
                \node[draw, circle, minimum size=0.3cm] (v_2) at (1.5,3.75) {};
                \node[tdvert, draw, circle, minimum size=0.3cm] (v_3) at (0.75,3) {};
                \node[draw, circle, minimum size=0.3cm] (v_4) at (1.5,2.25) {};
                \node[fill = white, draw, circle, minimum size=0.3cm] (v_5) at (2.25,2.25) {$a$};
                \node[draw, circle, minimum size=0.3cm] (p1_1) at (3.75,3) {};
                \node[tdvert, draw, circle, minimum size=0.3cm] (p1_2) at (4.5,3) {};
                \node[draw, circle, minimum size=0.3cm] (p1_d) at (4.875,2.5) {};
                \node[tdvert, draw, circle, minimum size=0.3cm] (p1_3) at (5.25,3) {};
                \node[draw, circle, minimum size=0.3cm] (p1_4) at (6,3) {};
                \node[draw, circle, minimum size=0.3cm] (p2_1) at (3.75,4) {};
                \node[tdvert, draw, circle, minimum size=0.3cm] (p2_2) at (4.5,4) {};
                \node[draw, circle, minimum size=0.3cm] (p2_d) at (4.875,3.5) {};
                \node[tdvert, draw, circle, minimum size=0.3cm] (p2_3) at (5.25,4) {};
                \node[draw, circle, minimum size=0.3cm] (p2_4) at (6,4) {};
                \node[draw, circle, minimum size=0.3cm] (p3_1) at (3.75,2) {};
                \node[tdvert, draw, circle, minimum size=0.3cm] (p3_2) at (4.5,2) {};
                \node[draw, circle, minimum size=0.3cm] (p3_d) at (4.875,1.5) {};
                \node[tdvert, draw, circle, minimum size=0.3cm] (p3_3) at (5.25,2) {};
                \node[draw, circle, minimum size=0.3cm] (p3_4) at (6,2) {};
                    \graph{
                        (x) -- {(p1_1), (p2_1), (p3_1)};
                        (p1_1) -- (p1_2); (p1_3) -- (p1_4); (p1_2) -- (p1_d); (p1_2) -- (p1_3);
                        (p2_1) -- (p2_2); (p2_3) -- (p2_4); (p2_2) -- (p2_d); (p2_2) -- (p2_3);
                        (p3_1) -- (p3_2); (p3_3) -- (p3_4); (p3_2) -- (p3_d); (p3_2) -- (p3_3);
                        (x) -- {(v_1), (v_2), (v_3), (v_4)};
                        (v_1) -- {(v_2), (v_3), (v_4), (v_5)};
                        (v_2) -- {(v_3), (v_4), (v_5)};
                        (v_3) -- {(v_4), (v_5)};
                        (v_4) -- (v_5);
                    };
            \end{tikzpicture}
            \caption{$G_8^7 = T^3_{6}$}
        \end{subfigure}
        \begin{subfigure}[b]{0.45\textwidth}
            \centering
            \begin{tikzpicture}[scale=.8, transform shape]
                \node[draw, circle, minimum size=0.3cm] (v_1) at (2.25,3.75) {};
                \node[draw, circle, minimum size=0.3cm] (v_2) at (1.5,3.75) {};
                \node[tdvert, draw, circle, minimum size=0.3cm] (v_3) at (0.75,3) {};
                \node[draw, circle, minimum size=0.3cm] (v_4) at (1.5,2.25) {};
                \node[tdvert, draw, circle, minimum size=0.3cm] (v_5) at (2.25,2.25) {$a$};
                 \node[tdvert, draw, circle, minimum size=0.3cm] (x) at (3,3) {$x$};
                \node[tdvert, draw, circle, minimum size=0.3cm] (p1_1) at (3.75,3) {};
                \node[tdvert, draw, circle, minimum size=0.3cm] (p1_2) at (4.5,3) {};
                \node[draw, circle, minimum size=0.3cm] (p1_d) at (4.875,2.5) {};
                \node[tdvert, draw, circle, minimum size=0.3cm] (p1_3) at (5.25,3) {};
                \node[tdvert, draw, circle, minimum size=0.3cm] (p1_4) at (6,3) {};
                \node[tdvert, draw, circle, minimum size=0.3cm] (p2_1) at (3.75,4) {};
                \node[tdvert, draw, circle, minimum size=0.3cm] (p2_2) at (4.5,4) {};
                \node[draw, circle, minimum size=0.3cm] (p2_d) at (4.875,3.5) {};
                \node[tdvert, draw, circle, minimum size=0.3cm] (p2_3) at (5.25,4) {};
                \node[tdvert, draw, circle, minimum size=0.3cm] (p2_4) at (6,4) {};
                \node[draw, circle, minimum size=0.3cm] (p3_1) at (3.75,2) {};
                \node[tdvert, draw, circle, minimum size=0.3cm] (p3_2) at (4.5,2) {};
                \node[tdvert, draw, circle, minimum size=0.3cm] (p3_d) at (4.875,1.5) {};
                \node[tdvert, draw, circle, minimum size=0.3cm] (p3_3) at (5.25,2) {};
                \node[tdvert, draw, circle, minimum size=0.3cm] (p3_4) at (6,2) {};
                    \graph{
                        (x) -- {(p1_1), (p2_1), (p3_1)};
                        (p1_1) -- (p1_2); (p1_3) -- (p1_4); (p1_2) -- (p1_d); 
                        (p2_1) -- (p2_2); (p2_3) -- (p2_4); (p2_2) -- (p2_d); 
                        (p3_3) -- (p3_4); (p3_2) -- (p3_d); 
                        (v_1) -- {(v_2), (v_3), (v_4), (v_5)};
                        (v_2) -- {(v_3), (v_4), (v_5)};
                        (v_3) -- {(v_4), (v_5)};
                        (v_4) -- (v_5);
                        };
            \end{tikzpicture}
                \caption{$b_t^7(G_8^7) =8$}
                \end{subfigure}
            \caption{Case 1 when $k$ odd and $r =5$. For example $k = 7$ and $b = 8$} 
            \label{fig:kOddrGEQThree}
    \end{center}
\end{figure}
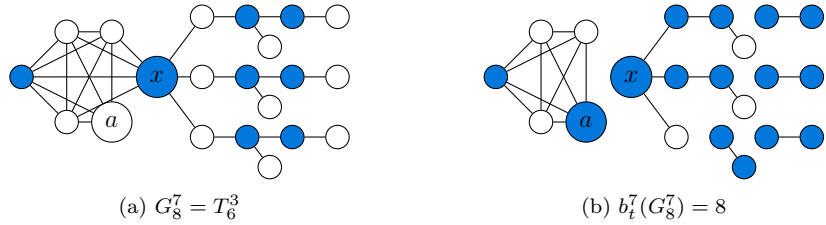

\begin{cor}
    For any $k$ and $b$, there exists a graph $G$ where $\bondnum{G}{k+1} =  \bondnum{G}{k} + b$. 
\end{cor}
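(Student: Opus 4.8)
The plan is to realize the prescribed jump by a \emph{disjoint union} $G = E \sqcup B$ of two gadgets and to read off $\bondnum{G}{k}$ and $\bondnum{G}{k+1}$ using Lemma \ref{lem:tdnum_components}. Since the total domination numbers of components add, deleting a set $S = S_E \cup S_B$ raises $\tdnum{G}$ by the sum of the raises in each part, so
\[
\bondnum{G}{j} = \min_{j_1 + j_2 = j}\bigl(\bondnum{E}{j_1} + \bondnum{B}{j_2}\bigr),
\]
the minimum taken over achievable splits. I would choose $E$ to be an \emph{efficient gadget whose total domination number can rise by at most $k$}, so that the $(k+1)$st unit of increase is forced into $B$ and therefore costs $\bondnum{B}{1} = b$.

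For the gadgets I would use short paths, exploiting that we never create isolated vertices. In $P_4$ the only deletable edge is the middle one, producing $P_2 \sqcup P_2$ and raising $\tdnum{}$ by $2$; in $P_5$ either interior edge produces $P_2 \sqcup P_3$ and raises $\tdnum{}$ by exactly $1$, while a second deletion would isolate a vertex. Hence $P_4$ is capped at $+2$ and $P_5$ at $+1$. Taking $E = \lfloor k/2\rfloor\,P_4$, together with one extra copy of $P_5$ when $k$ is odd, yields a gadget capped at exactly $k$ with the optimal profile $\bondnum{E}{j} = \lceil j/2\rceil$ for $0 \le j \le k$. For the tunable part I would take $B = K_{b,2b}$ (and $B = P_5$ in the special case $b=1$); Theorems \ref{KabThm0} and \ref{KabThm3} then give $\bondnum{B}{1} = b$ and $\bondnum{B}{2} = 2b$.

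Feeding these into the convolution, $\bondnum{G}{k}$ is attained by putting all of the increase into $E$ (cost $\lceil k/2\rceil$), and $\bondnum{G}{k+1}$ by the split $j_1 = k,\ j_2 = 1$ (cost $\lceil k/2\rceil + b$), so the difference is $b$. The one real obstacle is to rule out cheaper splits for the $k{+}1$ target that route two or more increments through $B$ — exactly the degeneracy that makes, say, $K_{b,b}$ fail, since there $\bondnum{K_{b,b}}{2} = \bondnum{K_{b,b}}{1}$. I would close this with two facts: every spanning subgraph of $K_{a,q}$ has total domination number at most $2a$ (take the whole small side together with one $Q$-neighbour of each small vertex), so $B = K_{b,2b}$ can raise its total domination number by at most $2b-2$; and bondage is nondecreasing, so $\bondnum{B}{j_2} \ge 2b$ for every achievable $j_2 \ge 2$. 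Combined with $\bondnum{E}{j_1}=\lceil j_1/2\rceil$ and the cap $j_2 \le 2b-2$, a short estimate shows every split with $j_2 \ge 2$ costs at least $\lceil k/2\rceil + b$, so the optimum is indeed $j_2 = 1$.

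The point I would watch most carefully is the parity bookkeeping that makes $E$ cap at \emph{exactly} $k$: the $P_5$ summand is what lets an odd cap be assembled from the even caps of the $P_4$'s. If instead $E$ capped at $k-1$ and the $(k+1)$st increase used the \emph{second} increase of $B$, the comparison threshold would rise to $\lceil k/2\rceil + 2b$ and the large-$j_2$ terms would no longer be controlled by monotonicity alone; capping $E$ at $k$ keeps the threshold at the first increase of $B$, where the bound on $B$'s cap suffices. I would finish by treating the small values of $b$ directly (the $P_5$ gadget already settles $b=1$), after which the corollary follows for every $k$ and $b$.
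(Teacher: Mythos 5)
Your proposal is correct, but it takes a genuinely different route from the paper. The paper's proof is a two-line appeal to Theorem \ref{thm:btk_for_any_k_m}: with $m=\lfloor k/2\rfloor$, it takes $G=T^m_{b+1}$ for even $k$ and $G=(T^m_{b+1})_x\oplus(P_3)_{v_1}$ for odd $k$. These are \emph{connected} one-point merges of the tree gadgets $T$ with $K_{b+1}\setminus\{e\}$; the first $k$ units of increase cost one edge per gadget (plus one edge of the $P_3$ in the odd case), and the $(k+1)$st unit forces the near-complete part to be cut off, which costs exactly $b$ additional edges. You instead take a \emph{disjoint union} $E\sqcup B$ of path gadgets ($\lfloor k/2\rfloor$ copies of $P_4$, plus a $P_5$ for odd $k$) with $B=K_{b,2b}$, and evaluate both bondage numbers through the convolution identity that follows from Lemma \ref{lem:tdnum_components}, with Theorems \ref{KabThm0} and \ref{KabThm3} supplying $\bondnum{B}{1}=b$ and $\bondnum{B}{2}=2b$. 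Your supporting claims all check out: $\bondnum{E}{j}=\lceil j/2\rceil$ with cap exactly $k$; the cap of $2b-2$ on the achievable increase in $B$ (the small side plus one neighbour of each small vertex is a TD-set of size at most $2b$ in any spanning subgraph without isolated vertices); and the elimination of splits with $j_2\ge 2$, since $j_2\le 2b-2$ gives $\lceil k/2\rceil-\lceil(k+1-j_2)/2\rceil\le\lceil(j_2-1)/2\rceil\le b-1<b$ while $\bondnum{B}{j_2}\ge 2b$ by monotonicity. Your choice of $K_{b,2b}$ over $K_{b,b}$ is also the right move, since $\bondnum{K_{b,b}}{2}=\bondnum{K_{b,b}}{1}=b$ would lose a unit of increase exactly when $k$ is odd. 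What each approach buys: yours is modular and nearly self-contained, reducing everything to component additivity plus known bipartite values, whereas the paper's is immediate but only for a reader who has absorbed the Theorem \ref{thm:btk_for_any_k_m} construction and its edge-counting. The one thing the paper's construction has that yours lacks is connectivity: the abstract and Section 2 state the convention that graphs start connected, and your argument cannot be repaired by merging vertices or adding edges between components without destroying the additivity it rests on. Since the corollary only asserts ``there exists a graph,'' this is a clash with the paper's conventions rather than a mathematical error, but it is worth flagging explicitly.
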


\begin{proof}
    The result follows from Theorem \ref{thm:btk_for_any_k_m}. Let $m = \lfloor \frac{k}{2}\rfloor.$
    When $k$ is even, consider $G = T^m_{b+1}$ and when $k$ is odd, consider $G = (T^m_{b+1})_x \oplus (P_3)_{v_1}$.
\end{proof}

\end{document}